\definecolor{mygray}{gray}{0.85}
\numberwithin{equation}{section}
\theoremstyle{plain}
\newtheorem{theorem}{Theorem}[section]
\newtheorem{lemma}[theorem]{Lemma}
\newtheorem{question}[theorem]{Question}
\newtheorem{proposition}[theorem]{Proposition}
\newtheorem{construction}[theorem]{Construction}
\newtheorem{corollary}[theorem]{Corollary}
\newtheorem{context}[theorem]{Context}
\newtheorem{fact}[theorem]{Fact}
\theoremstyle{definition}
\newtheorem{definition}[theorem]{Definition}
\newtheorem{notation}[theorem]{Notation}
\newtheorem{remark}[theorem]{Remark}
\renewcommand{\leq}{\leqslant}
\renewcommand{\geq}{\geqslant}
\newcommand\myrestriction{\mathord\restriction}
\def\ZZ{{\mathbb Z}}
\newcommand{\NN}{\mathbb{N}}
\newcommand{\Cscr}{\mathcal C}
\newcommand{\Gscr}{\mathcal G}
\newcommand{\Tscr}{\mathcal T}
\newcommand{\Uscr}{\mathcal U}
\def\abar{\mbox{\boldmath $a$}}
\def\bbar{{\bf b}}
\def\hbar{{\bf h}}
\def\xbar{{\bf x}}
\def\ybar{{\bf y}}
\def\bU{{\bf U}}
\def\bL{{\bf L}}
\def\bK{{\bf K}}
\def\dcl{\rm dcl}
\def\acl{\rm acl}
\def\th{\rm Th}
\def\subsection{\@startsection{subsection}{3}%
  \z@{.5\linespacing\@plus.7\linespacing}{.3\linespacing}%
  {\bfseries\centering}}
\def\subsubsection{\@startsection{subsubsection}{3}%
  \z@{.5\linespacing\@plus.7\linespacing}{.3\linespacing}%
  {\centering}}
\date{\today}
\begin{document}

\title[Coordinatization and Quasigroups]{Strongly minimal Steiner Systems II: Coordinatization and
quasigroups}

\corrauthor[John T. Baldwin]{John T. Baldwin}
\address{Department of Mathematics, Statistics and Computer Science\\
University of Illinois at Chicago\\Chicago, Illinois 60607\\USA}
\urladdr{http://http://homepages.math.uic.edu/~jbaldwin/}
\email{jbaldwin@uic.edu}

\thanks{\thanks{Research partially supported by Simons
travel grant  G3535.}}

\subjclass{03C05,  08A05, 05B05, 03C35}

\keywords{Steiner $k$-systems, strongly minimal sets, quasigroups}

\begin{abstract}
Each strongly minimal  Steiner $k$-system  $(M,R)$ (where is $R$ is a
ternary collinearity relation) can be `coordinatized' in the sense of
(Ganter-Werner 1975) by a quasigroup if $k$ is a prime-power. We show this
coordinatization is never definable in $(M,R)$ and the strongly minimal
Steiner $k$-systems constructed in (Baldwin-Paolini 2020) never interpret a
quasigroup. Nevertheless, by refining the construction, if $k$ is a prime
power, in each $(2,k)$-variety of quasigroups (Definition 3.10) there is a
 strongly minimal quasigroup that  interprets a Steiner
 $k$-system.\end{abstract}

\maketitle

Data sharing not applicable to this article as no datasets were generated
or analysed during the current study.

\section{Introduction} \label{intro}
\begin{quote}
Steiner Triple Systems are in a 1-1-correspondence with the so-called
squags (Steiner quasigroups: groupoids satisfying the identities $xx=x,
xy = yx, x(xy)= y)$. With the help of this correspondence, many
combinatorial properties of Steiner Triple Systems can be described in an
algebraic language, and algebraic methods have successfully been applied.

Ganter and Werner \cite[opening paragraph]{GanWer}
\end{quote}

 A \emph{ linear space} is collection of points and lines that satisfy a
minimal condition to call a structure a geometry: two points determine a
line.  For us, a \emph{ Steiner $k$-system} is a linear space such that
every line (block) has cardinality $k>2$. A quasigroup is a structure with
a single binary operation whose multiplication table is a Latin square
(each row or column is a permutation of the universe).

The main contribution of \cite{GanWer} is to generalize the correspondence
they describe to Steiner $q$-systems when $q=p^n$ for some $n$ and show
these are the only Steiner $k$-systems in our sense that can be
coordinatized. (They also classify the coordinatization of Steiner systems
where a block is determined by $k$ elements with $k> 2$.)

We combine  methods developed to study $\aleph_1$-categorial first order
theories to construct new families of quasigroups with the methods of
 universal algebra \cite{GanWer} to  study interpretability between
(coordinatization of) Steiner systems and quasigroups. Section~\ref{bg}
lays out model theoretic background on the general Hrushovski method
(Section~\ref{gencon}), the special case that yields Steiner systems
(Section~\ref{genspace}), and the notion of interpretation
(Section~\ref{coord}). In Section~\ref{assqg} we distinguish interpretation
from coordinatization of Steiner systems (e.g. \cite{GanWer}) and show the
strongly mimimal Steiner systems are interpretable in incomplete theories
of quasigroups but not conversely. In Section~\ref{findqg}, we extend the
Hrushovski technology to construct strongly minimal quasigroups.  We
conclude with universal algebraic questions which arise from this
construction.

Steiner $k$-systems are generally considered in a $2$-sorted vocabulary
with sorts for points and lines; for reasons discussed in
Remark~\ref{1sort}, we use the bi-interpretable (\cite{BaldwinPao}) setting
of a single sorted structure $(M,R)$ with one ternary `collinearity'
relation for both linear spaces and Steiner systems.
 Such mathematicians as Steiner, Bose, Skolem, and Bruck have established
deep connections between the existence of a Steiner system with $v$ points
and blocks of size $k$ and divisibility relations among $k$ and $v$. This
interaction with number theory is reflected in a line of work from the
1950-1980's including \cite{Steinpnas,Gratdt, GanWer,Evans2var}. It
culminates with the proof  that Steiner $k$-systems are `coordinatized' by
varieties of quasigroups if and only if $k$ is a prime power, $q$. We
consider Steiner systems of every infinite cardinality with two contrasting
results. Building on Fact~\ref{disp2a}, we prove Theorem~\ref{disp2} as
Theorem~\ref{coordthm} of this paper. Below, we label as `facts' theorems
from our earlier papers that are used here.

\begin{fact}[\cite{BaldwinPao}]\label{disp2a} For each $k\geq 3$ there are
uncountably many $\mu$  and an associated theory $T_\mu$ such that $T_\mu$
    is the theory of a strongly minimal Steiner $k$-system $(M,R)$. $\mu$
    is a function into the natural numbers counting the realizations of
    good pairs (Definition~\ref{Kmu}).
\end{fact}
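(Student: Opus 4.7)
The plan is a Hrushovski-style amalgamation-with-predimension construction. Fix $k \geq 3$ and work in the one-sorted vocabulary $\{R\}$ with $R$ a symmetric, irreflexive ternary collinearity predicate. Let $K_0$ be the class of finite $\{R\}$-structures that are partial Steiner $k$-systems: any two distinct points lie in at most one maximal collinear set, and no such set exceeds $k$ points. One then introduces a predimension
\[
\delta(A) \;=\; |A| \;-\; \sum_{L \in \Lambda(A)} (|L|-2),
\]
where $\Lambda(A)$ is the family of maximal collinear subsets of $A$ of cardinality at least $3$. This rewards points but charges each line for its algebraic redundancy (two points determine the rest), so a full line of $k$ points contributes $\delta = 2$, matching the expected rank of a line. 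One checks that $\delta$ is $\mathbb{Z}$-valued, submodular, and non-negative on $K_0$; this gives a self-sufficient substructure relation $A \leq B$ (namely $\delta(A') \geq \delta(A)$ for every $A \subseteq A' \subseteq B$) together with a $\delta$-closure operator of finite character.

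The next step is to isolate the \emph{good pairs}: minimal extensions $A \leq B$ in $K_0$ with $\delta(B) = \delta(A)$ and no proper intermediate self-sufficient copy. For each function $\mu$ assigning a non-negative integer to every isomorphism type of good pair, one defines $K_\mu \subseteq K_0$ to consist of those structures in which no good pair $(A,B)$ is realized more than $\mu(A,B)$ times over any fixed self-sufficient copy of $A$. Showing that $(K_\mu,\leq)$ retains the amalgamation property is the main technical obstacle: given $A \leq B_1, B_2$ in $K_\mu$, one takes the free amalgam $B_1 \sqcup_A B_2$ (introducing no new collinearities) and argues, via submodularity together with the minimality built into the definition of a good pair, that any good pair appearing in the amalgam already lies inside $B_1$ or $B_2$; hence the $\mu$-bounds transfer. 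Joint embedding is handled analogously.

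With amalgamation in hand, one obtains a countable Fra\"{\i}ss\'e--Hrushovski generic $M_\mu \in K_\mu$ and sets $T_\mu := \operatorname{Th}(M_\mu)$. Standard Hrushovski arguments yield $\omega$-stability; the $\mu$-bounds ensure that algebraic closure in $M_\mu$ coincides with $\delta$-closure, so there is a unique non-algebraic $1$-type over every algebraically closed set, giving strong minimality. That every model of $T_\mu$ is actually a Steiner $k$-system, rather than merely partial, is forced by choosing $\mu$ so that the good pair extending two points to a full line of length $k$ is realized exactly once; this is the axiom that two points always determine a unique line of length $k$. Finally, since there are infinitely many distinct good-pair types, the continuum of admissible $\mu$ produces uncountably many pairwise non-isomorphic generics and hence uncountably many theories $T_\mu$, as required.
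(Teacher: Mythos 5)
Your outline follows the same route as the paper's source \cite{BaldwinPao} --- the predimension $\delta(A)=|A|-\sum_{\ell}(|\ell|-2)$, good pairs, $\mu$-bounds, and a generic model --- but two steps as written would fail. First, the amalgamation step: the free amalgam $B_1\sqcup_A B_2$ ``introducing no new collinearities'' is not even a linear space, hence leaves your class $K_0$. If a line $\ell$ has two points $c_1,c_2$ in $A$ and is extended by $b_1\in B_1-A$ and by $b_2\in B_2-A$, then with no new collinearities the pair $c_1,c_2$ lies in two distinct maximal cliques, violating ``two points determine at most one line.'' The canonical amalgam (Definition~\ref{defcanam}, i.e.\ \cite[Definition 3.7]{BaldwinPao}) must \emph{fuse} every line based in the common part, forcing $b_1$ and $b_2$ onto the same line; controlling the effect of these forced new $R$-instances on $\delta$, on line length, and on the count of realizations of good pairs is exactly the content of Lemmas 5.11 and 5.15 of \cite{BaldwinPao}, and it is the genuinely nontrivial part of the proof that your sketch skips by assuming the amalgam is free. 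Second, for $k>3$ the extension of two points to a full $k$-point line is not a good pair --- it is not even $0$-primitive, since adjoining a single third point already gives a strong intermediate structure of the same predimension. The relevant good pair is the one-point extension $\boldsymbol{\alpha}=(\{b_1,b_2\},\{a\})$, and line length $k$ is enforced by setting $\mu(\boldsymbol{\alpha})=k-2$ (Fact~\ref{linelength}), not by realizing a ``full line'' pair exactly once.

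Two smaller points. The admissible $\mu$ cannot be arbitrary non-negative integer-valued functions: amalgamation and strong minimality require a lower bound, $\mu(C/B)\geq \delta(B)$ for pairs other than $\boldsymbol{\alpha}$ and $\mu(\boldsymbol{\alpha})\geq 1$ (the class $\Uscr$ of Definition~\ref{BPsetting}), so taking $\mu\equiv 0$ off the line-extension pair, which your formulation permits, would break the construction. And non-isomorphism of the countable generics does not by itself give uncountably many \emph{theories}; \cite{BaldwinPao} separates $2^{\aleph_0}$ of the $T_\mu$ up to elementary equivalence by a further argument using cycle graphs.
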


The argument for the Theorem~\ref{disp2} (given in Section~\ref{assqg} is
heavily based on Theorem ~\ref{coordthm} \cite{GanWer}, where
`coordinatized' is a particular form of `interpreted'. Mikado varieties are
described in Definition~\ref{rkdef}. Item 2) is immediate since the line
length given a quasigroup must be a prime power (Lemma~\ref{pab}).  The
stronger (and much harder) result that no quasi-group can be interpreted in
any of the constructed strongly minimal Steiner $k$-systems is reported in
Fact~\ref{bvnobin}.

\begin{theorem}\label{disp2} If $T_\mu$ is the theory of a strongly minimal Steiner
    $k$-system $(M,R)$ constructed as in \cite{BaldwinPao}, then

\begin{enumerate}
\item If $k$ is a prime power $q$, for each Mikado $(2,q)$-variety $V$,
    there is a quasigroup in $V$ that interprets $(M,R)$ with lines as
    $2$-generated subalgebras and thus $T_\mu$ is interpreted in an
    incomplete theory $\check T_{\mu,V}$.
    \item If $k$ is not a prime power the Steiner system does have such
        an interpretation.
\item  Unless $q=3$, the interpreting quasigroup  is not interpretable in
    $(M,R)$.
\end{enumerate}
\end{theorem}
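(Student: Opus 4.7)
The three items split in difficulty: (2) is immediate, (3) is a forward reference to a later fact, and (1) is the substantive new content. I would dispose of (2) first: if any quasigroup $Q$ interprets $(M,R)$ with lines equal to $2$-generated subalgebras, then each line has cardinality equal to the size of a $2$-generated subalgebra in $Q$, which by Lemma~\ref{pab} must be a prime power; the contrapositive gives item (2). For item (3), when $q>3$ one cites Fact~\ref{bvnobin}: no binary function is interpretable in the $T_\mu$ of \cite{BaldwinPao}. For $q=3$ the squag operation $x\cdot y = $ the unique third point of the line $\overline{xy}$, together with $x\cdot x = x$, is $R$-definable, so the exception is genuine.

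For item (1), the plan is to refine the Baldwin--Paolini Hrushovski-style construction of Fact~\ref{disp2a} so that the generic carries simultaneously the collinearity relation $R$ and a binary operation $\cdot$ satisfying the identities of the chosen Mikado $(2,q)$-variety $V$. Concretely, I would replace the class of finite partial linear spaces used to build $T_\mu$ with the class of finite partial $V$-algebras whose $2$-generated subalgebras are Steiner lines of size $q$. The Ganter--Werner coordinatization of the affine space $\mathrm{AG}(n,q)$ supplies the finite prototypes that populate this class. Good pairs (Definition~\ref{Kmu}) are re-interpreted so that the closure operation $\cl$ incorporates closure under the operation $\cdot$; the counting function $\mu$ then bounds the realizations of good pairs in the enriched signature. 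Free amalgamation in the new class is defined by taking disjoint union over the common substructure and imposing only the $V$-identities; because $V$ is Mikado, i.e.\ its defining identities involve at most two variables, no $3$-generated identities are forced on the amalgam, so the Steiner linear-space structure of the union remains the disjoint union of the factor linear spaces.

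From here the argument parallels \cite{BaldwinPao}: predimension, self-sufficient embedding, and the $\mu$-bound give an $\aleph_1$-categorical strongly minimal theory $\check T_{\mu,V}$; its models are quasigroups in $V$, and the reduct to $R$ gives a model of $T_\mu$, yielding the desired interpretation. Since $\check T_{\mu,V}$ depends on choices not determined by $T_\mu$ (e.g.\ which element of each pair of lines meets which algebraic closure computation), different choices give non-elementarily-equivalent completions, so one obtains the \emph{incomplete} theory $\check T_{\mu,V}$ as claimed.

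The main obstacle is verifying that the amalgamation class so defined is in fact an amalgamation class: one must check that the $V$-identities, restricted to $2$-variable instances, do not obstruct gluing two finite $V$-algebras over a common subalgebra, and that the predimension inequalities used to guarantee strong minimality survive the enrichment by $\cdot$. The Mikado hypothesis is exactly what makes this work, since it localizes all identities to pairs of generators, matching the fact that lines are $2$-generated; this is where items (1) and (2) meet as two halves of the same prime-power phenomenon.
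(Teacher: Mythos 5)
Your handling of items (2) and (3) essentially matches the paper: (2) is the contrapositive of the prime-power constraint on $(2,k)$-varieties (Lemma~\ref{pab}, Corollary~\ref{kq}), and for (3) the paper uses both the automorphism argument of Theorem~\ref{coordthm}.(2) (permuting the points of a line over a fixed pair $a,b$ shows the coordinatizing product is not definable) and, for full non-interpretability, Fact~\ref{bvnobin} --- though note that Fact~\ref{bvnobin}.(1) carries the extra hypothesis $\mu\in\Tscr$ ($\mu$-triples), which you should flag since it is not assumed in the statement.

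Item (1) is where there is a genuine problem: you have imported the strategy of Theorem~\ref{disp3} (Section~\ref{findqg}), not the one this theorem needs, and that strategy does not prove the claim. The statement asks for a quasigroup in $V$ that interprets \emph{the given} $(M,R)\models T_\mu$ with lines as $2$-generated subalgebras. If you rebuild the Hrushovski amalgamation class in the enriched vocabulary $\{R,*\}$, the counting of good pairs changes (each $\tau$-good pair splits into finitely many $\tau'$-good pairs, and the reduct acquires more copies of each configuration than $\mu$ allows); the paper states explicitly that $T_{\mu',V}\myrestriction\tau$ is \emph{not} $T_\mu$. So your generic's $R$-reduct models a different strongly minimal theory, and you never produce a quasigroup whose derived Steiner system is the $(M,R)$ you started with. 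The paper's actual argument (Lemma~\ref{2propagates}, Theorem~\ref{directgammaA}, Theorem~\ref{coordthm}.(1)) is soft and requires no new amalgamation: take the already-constructed $(M,R)$, impose an arbitrary copy of $F_2(V)$ on each line; because $V$ is binary and idempotent (the Mikado hypothesis), Evans' observation shows the resulting $(M,*)$ lies in $V$, and $R(x,y,z)\leftrightarrow\bigvee_i z=f_i(x,y)$ recovers the lines as $2$-generated subalgebras. The theory $\check T_{\mu,V}$ is then axiomatized by $Eq(V)$ together with the translations of $T_\mu$, and is incomplete precisely because the line-by-line choice of $F_2(V)$-structure is not determined by $(M,R)$ (e.g.\ the two inequivalent block algebras over $F_5$). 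Your proposal both overcomplicates the proof and, as written, fails to deliver an interpretation of $(M,R)$ itself.
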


A complete first order theory is strongly minimal if every definable set in
every model is finite or cofinite.  Equivalently, in  a strongly minimal
theory $T$ the model theoretic notion of algebraic closure determines a
combinatorial geometry (matroid) with all bases automorphic. Model
theorists say $a \in \acl_M(B)$ if for some $\phi(x,\bbar)$ with $\bbar \in
B$, $M$ satisfies both $\phi(a,\bbar)$ and, for some $k$, $(\exists^{<k}
x)\phi(x,\bbar)$. We write bold face $\bbar$ to indicate a finite sequence
of elements, while $b$ is a singleton. Zilber
  conjectured that these geometries were all disintegrated ($\acl(A) =
\bigcup_{a\in A} \acl_M(a)$), locally modular (group-like), or field-like.
The examples here modify Hrushovki's construction that refuted this
conjecture \cite{Hrustrongmin}.  A geometry is \emph{ flat} \cite[Section
4.2]{Hrustrongmin} if the dimension of a closed subspace is determined from
its own closed subspaces by the inclusion-exclusion principle
\cite[Definition
 3.8]{BaldwinPao}. Hrushovski's flat counterexamples  have generally been
 regarded as an amorphous class of exotic structures.  Indeed, a
 distinguishing characteristic is the inability to formally define an
associative operation with infinite domain in any structure with a flat
$\acl$-geometry.  Here, we show that non-associative does not mean
uninteresting.

Although the Steiner systems  in Theorem~\ref{disp2}.(3) do not define a
quasi-group, when $q$ is a prime power we can find strongly minimal
quasigroups that induce strongly minimal Steiner $q$-systems.

\begin{theorem}\label{disp3}
For each $q$ and each of the $T_\mu$ in Theorem~\ref{disp2} with line
length $k = q =  p^n$ (for prime $p$) and each Mikado $(2,q)$ variety of
quasigroups $V$, there is a strongly minimal theory of quasigroups,
$T_{\mu',V}$ such that taking the $2$-generated sub-quasigroups as lines
yields a strongly minimal Steiner $q$-system.
\end{theorem}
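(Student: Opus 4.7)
The plan is to rerun the Hrushovski--Baldwin--Paolini construction of Fact~\ref{disp2a}, but in the category of finite partial quasigroups from the Mikado $(2,q)$-variety $V$ rather than in the category of partial collinearity structures $(A,R)$. Concretely, I would let $K_0'$ consist of finite partial $V$-algebras $A$ in which every $2$-generated total sub-quasigroup is the (unique) $V$-free algebra on two generators --- a Latin square of order $q$ --- and whose $R$-reduct, defined by $R(x,y,z)$ iff $\{x,y,z\}$ lies in a common $2$-generated subalgebra, lies in the Baldwin--Paolini class $K_0$. The predimension $\delta'$ on $K_0'$ is taken to agree with the original $\delta$ computed on the $R$-reduct; the key observation is that once a $2$-generated subalgebra is present, the Mikado hypothesis forces its whole multiplication table, so no dimension is added by the extra operation.

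Next I would lift $\mu$ to a counting function $\mu'$ on good pairs of $(K_0',\delta')$. Each good pair $(A/B)$ in $K_0'$ reduces to a good pair in $K_0$, and $\mu'$ is chosen so that the total number of $K_0'$-good pairs sitting over a fixed $R$-good pair is controlled in a way compatible with both strong minimality and with producing a Steiner $q$-system as the $R$-reduct. Amalgamation in $(K_0',\delta',\mu')$ then reduces to the following step: given $B \hookrightarrow A_1, A_2$ in $K_0'$, the free amalgam of the $R$-reducts carries a unique $V$-algebra structure extending the given ones, because each $2$-generated subalgebra freshly appearing in the amalgam is filled in by the free $V$-algebra on the corresponding generators. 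The Mikado hypothesis is essential here: it guarantees that these free fill-ins produce no unintended term identifications, so that $\delta'$-preservation on the amalgam really does follow from $\delta$-preservation of the reducts.

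Once amalgamation is in hand, the standard Hrushovski machinery of Section~\ref{gencon} yields a Fra\"iss\'e--Hrushovski generic whose complete theory $T_{\mu',V}$ is $\omega$-stable and strongly minimal by exactly the same dimension arguments as for $T_\mu$. By construction the $R$-reduct of the generic is a strongly minimal Steiner $q$-system whose lines are precisely the $2$-generated sub-quasigroups, yielding the statement of the theorem.

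The main obstacle I anticipate is the amalgamation step. The Mikado hypothesis is what makes $2$-generated fill-ins canonical, but one must also check that it blocks collisions when distinct fresh lines share a point or when a fresh line cuts across the previously amalgamated base $B$. A careful choice of canonical quasigroup structure on each line, paired with the right bookkeeping in $\mu'$, should handle this; if a direct argument fails, one can shrink $K_0'$ by imposing extra equational axioms valid in $V$ and replace $\mu$ by a mild refinement $\mu'$, which the statement of the theorem already permits.
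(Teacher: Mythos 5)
Your overall architecture coincides with the paper's: expand the vocabulary by the quasigroup operation (the paper carries the graph $H$ of $*$ alongside $R$), declare the predimension of an expanded structure to be $\delta$ of its $R$-reduct, lift $\mu$ to a function $\mu'$ on the expanded good pairs, prove amalgamation, and then quote the strong-minimality machinery. But there is a genuine gap exactly where you say you anticipate an obstacle, and one of the claims you use to dismiss it is false. You assert that the free amalgam of the $R$-reducts carries a \emph{unique} $V$-algebra structure extending the given ones. It does not: a fixed $\tau$-good pair $(A/B)$ admits finitely many pairwise non-isomorphic expansions to the larger vocabulary, depending on how the copies of $F_2(V)$ on the several lines through a common point interact. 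This non-uniqueness is precisely why the paper must assign $\mu'(\boldsymbol{\gamma}'_i)=\mu(\boldsymbol{\gamma})$ to \emph{each} of the $r_{A/B}$ expansions $\boldsymbol{\gamma}'_i$ of a given $\tau$-type $\boldsymbol{\gamma}$ --- so the $\tau$-reduct of the new generic is \emph{not} a model of $T_\mu$, only of a strongly minimal theory with inflated multiplicities --- and why only $\mu'(\boldsymbol{\alpha}')$ is forced to equal $1$. Your vague stipulation that the counting be ``controlled in a way compatible with strong minimality'' hides the fact that the expected identity $\mu'=\mu$ cannot hold.

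The second, harder, missing piece is the collision analysis. In the canonical amalgam a line based in the common part can acquire points from both sides, and since every line must be completed to exactly $q$ points before the quasigroup structure can be imposed, one must rule out a merged line exceeding $q$ points or two completions being forced to disagree. The paper closes this by (i) passing to the intermediate class $\tilde \bK_{q}$ of structures with \emph{full} lines, using a strong embedding into the generic $\Gscr_\mu$ to produce a canonical completion $\tilde C$ with no new intersections, and (ii) reducing amalgamation to the case where $E$ is $0$-primitive over $D$, in which case any line with two points in $E-D$ meets $D$ in at most one point, so the completions on the two sides cannot clash. Note also that the full-lines condition is only $\forall\exists$-axiomatizable, so smoothness of the class is no longer automatic and the Kueker--Laskowski framework must be invoked explicitly; your appeal to ``the standard Hrushovski machinery'' skips this. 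None of these steps is unreachable from your setup, but as written the proof of amalgamation --- the actual mathematical content of the theorem --- is deferred rather than given.
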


We explain in Section~\ref{findqg} how $\mu'$ is generated from $\mu$ and
why the quasigroups satisfying $T_{\mu',V}$ are in $V$.

In \cite{BaldwinsmssIII} we investigate  various combinatorial problems
about the classes of quasigroups constructed here. In particular, we find
strongly minimal Steiner triple systems (whose automorphism groups are
two-transitive)  of every infinite cardinality, and then easily deduce they
have uniform cycle graphs \cite{CameronWebb}, and further that are
$\infty$-sparse in the sense of \cite{Chicoetal}.   We discus  in the
introduction and Remark 5.27 of \cite{BaldwinPao} and in
\cite{BaldwinsmssIII} the connections of this work with, among others,
Barbina-Casanovas, Conant-Kruckman, Horsley-Webb,  and Hyttinen-Paolini
\cite{BarbinaCasa, ConKr, HorsleyWebb, HyttinenPaolinifree}. These works
construct first order theories of Steiner systems or projective planes that
are at the other end of the stability spectrum from those here. Evans
\cite{Evanssteiner} uses the Hrushovski construction to address
combinatorial issues about Steiner systems.

This paper depends heavily on the results and notation of \cite{BaldwinPao,
BaldwinVer}. Certain arguments will require consulting those papers.

     \section{Model Theoretic Preliminaries}\label{bg}
     \numberwithin{theorem}{subsection}

In Section~\ref{gencon}, we lay out the general pattern of a construction
of a strongly minimal set by the `Hrushovki method'. Fra{\" \i}ss\'{e} and
J{\'o}nsson generalized  the Hausdorff notion of
 `universal' linear orders to  universally axiomatizable classes of
 structures. Hrushovski provided a `pre-processing' for this technique that
gives a general method for constructing theories of various model theoretic
complexities. We specify here various refinements of his method that apply
in universal algebra and combinatorics. We deal   primarily with `ab
initio' constructions that begin with a collection of finite structures as
opposed to expansions of structures (`bad fields') or fusions. The two page
 \cite[Section 2.1]{BaldwinPao} (arxiv) summarizes the role of strongly
minimal sets in model theory and how strongly minimal Steiner systems
arise.

 The basic ideas of the Hrushovski method are: i) Modify the
 Fra{\"\i}ss\'{e} construction of countable homogeneous-universal
 structures by replacing the relation of substructure between finite
structures  by a relation of strong substructure ($\leq$)  defined using a
pre-dimension function $\epsilon$ with $\epsilon(A) \in \NN$ for each
finite $A$ in a specified class $\bL_0$.

ii) Employ a function $\mu$ to bound the number $0$-primitive extensions of
each finite structure to obtain a class $(\bL_\mu,\leq)$. Then apply
\cite{Fraisse} to that class  so that closure in the geometry on the
generic model  is algebraic closure. Sections \ref{gencon} and
\ref{genspace} are a series of definitions and results needed to apply the
Hrushovski construction as modified in \cite{BaldwinPao}.

\subsection{The Hrushovski method}\label{gencon}

 In this section we first describe this method axiomatically while listing
 the five kinds of parameters that must be specified for any particular
 family of constructions. We slightly generalize Hrushovski's approach by
 using work of Kueker and Laskowski \cite{KuekerLas} to weaken  the
 requirement imposed by both  Fra{\" \i}ss\'{e} and the original Hrushovski
constructions that the collection of finite structures is closed under
substructure.

\begin{definition}[\cite{KuekerLas}]\label{smcl} \begin{enumerate}
\item  A countable collection $(\bL_0,\leq)$ of finite structures, with a
    transitive relation ($\leq$:  \emph{ strong substructure}) on $\bL_0$
    that refines substructure, is \emph{ smooth} if $B \leq C$ implies $B
    \leq C'$ if $B \subseteq C' \subseteq C$.

     \item Given a class of finite structures $L_0$, $\hat L_0$ denotes the
         collection of structures of direct limits of members of
$L_0$.
\end{enumerate}
\end{definition}

\begin{theorem}[\cite{KuekerLas}]\label{KL} If a smooth
    class satisfies the amalgamation and joint embedding properties there
    is a countable generic model $\Gscr$ (See Definition~\ref{gendef}.).

\end{theorem}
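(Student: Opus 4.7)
The plan is to adapt the standard Fra\"iss\'e construction to the smooth-class setting. I would first pin down what $A \leq M$ means for $M \in \hat \bL_0$: namely, $A \in \bL_0$ is a strong substructure of $M$ if $M$ is the union of a $\leq$-chain in $\bL_0$ beginning with $A$. Smoothness of $(\bL_0,\leq)$ ensures this is equivalent to the local condition that $A \leq B$ for every $B \in \bL_0$ with $A \subseteq B$ sitting inside some strong finite piece of $M$. Then I would define $\Gscr$ to be \emph{generic} by three clauses: (i) $\Gscr \in \hat \bL_0$; (ii) $\Gscr$ is the union of a $\leq$-chain from $\bL_0$; and (iii) the $(\bL_0,\leq)$-extension property holds, i.e., whenever $A \leq \Gscr$ with $A \in \bL_0$ and $A \leq B$ in $\bL_0$, there is a $\leq$-embedding of $B$ into $\Gscr$ fixing $A$.

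Next I would enumerate the requirements. Because $\bL_0$ is countable, there are only countably many isomorphism types of one-step strong extensions $A \leq B$; fix an enumeration $(A_n,B_n)_{n<\omega}$ and, separately, an enumeration of (isomorphism types of) members of $\bL_0$ itself. Using the joint embedding property, pick any $C_0 \in \bL_0$ to start. At stage $n+1$, a standard bookkeeping selects a pair $(A_m,B_m)$ and a specific $\leq$-embedding $f\colon A_m \to C_n$ (with $f(A_m) \leq C_n$) that has not yet been served; amalgamation over $f(A_m)$ produces $C_{n+1} \in \bL_0$ with $C_n \leq C_{n+1}$ realizing the required strong copy of $B_m$. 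Periodically I would also invoke joint embedding to absorb the next structure on the $\bL_0$-enumeration list. Setting $\Gscr = \bigcup_n C_n$ yields an element of $\hat \bL_0$.

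To finish, I would verify the three clauses of genericity. Clauses (i) and (ii) hold by construction. For (iii), given $A \leq \Gscr$ in $\bL_0$ and a prescribed $A \leq B$, the set $A$ is finite and therefore contained in some $C_n$; smoothness upgrades $A \leq \Gscr$ to $A \leq C_n$, so the bookkeeping will have scheduled (a $C_m$-copy of) the pair $(A,B)$ at some later stage $m \geq n$, and transitivity of $\leq$ delivers the desired strong embedding of $B$ into $C_{m+1} \leq \Gscr$. Universality (every member of $\bL_0$ embeds strongly into $\Gscr$) is similarly guaranteed by the interleaved joint-embedding steps.

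The main obstacle is not the combinatorics of the back-and-forth, which is routine, but the discipline forced by the \emph{smooth} rather than hereditary setting. In classical Fra\"iss\'e one freely takes arbitrary finite substructures as inputs to amalgamation; here I can only amalgamate over $A$'s that are genuinely $\leq$-strong in $C_n$, and I must ensure that the new $C_{n+1}$ keeps every earlier strong substructure strong. Transitivity of $\leq$ handles the latter, and smoothness is precisely what lets the local/chain descriptions of $A \leq \Gscr$ coincide, so that the extension property makes sense and can be verified by locating $A$ in a single $C_n$. Making this interface rigorous is the delicate step; once it is set up, the construction and verification proceed by the usual priority argument.
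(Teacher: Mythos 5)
The paper does not actually prove this statement: it is imported verbatim from Kueker--Laskowski \cite{KuekerLas}, so there is no in-paper argument to compare against. Your construction is the standard one (enumerate strong pairs, dovetail, amalgamate over genuinely strong substructures, use JEP to seed and to absorb all of $\bL_0$, use transitivity of $\leq$ to keep old requirements satisfied, use smoothness to localize $A\leq\Gscr$ to $A\leq C_n$), and it is essentially the argument of the cited source. The combinatorial core is sound.

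The one real discrepancy is that you prove the wrong-looking conclusion relative to \emph{this} paper's Definition~\ref{gendef}: there, ``generic'' is defined by $\leq$-homogeneity (isomorphic finite strong substructures are automorphic), whereas your clause (iii) is the extension (richness) property. As written, your verification stops before establishing what the theorem, read against Definition~\ref{gendef}, actually asserts. The bridge is routine but should be stated: since $\Gscr=\bigcup_n C_n$ with $C_n\leq C_{n+1}$, every finite subset of $\Gscr$ lies in some $C_n\leq\Gscr$; given finite isomorphic $A,B\leq\Gscr$ and $a\in\Gscr$, pick $C_n\supseteq A\cup\{a\}$, use smoothness to get $A\leq C_n$, and apply the extension property over $B$ to the pair $A\leq C_n$ to extend the isomorphism; iterating back and forth yields the automorphism. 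Adding that paragraph (and noting that the finite-closure property needed for it comes for free from the chain presentation) closes the gap; everything else in your outline is correct.
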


 The extension in \cite{KuekerLas} to an abstract treatment of a smooth
    class $(L,\leq)$ includes the Hrushovski construction of strongly
    minimal sets since both the definitions of the class and the strong
    extension relation are by universal sentences. To my knowledge, the
    construction here of strongly minimal quasigroups
    (Section~\ref{findqg}) is the first place where a $AE$-axiomatizable
    smooth class is used to study strongly minimal sets.

In \cite{Baldwinfg} we listed three of the major variants of the Hrushovski
method as of 2010 and that number has at least doubled in the ensuing
decade. Those variants range through the stability hierarchy and some
involve infinitary logics. The \emph{ fine structure of the original
method} \cite{Hrustrongmin} has been studied only in our recent work.

  We now describe the general framework for several different constructions
that appear in this paper; we use $(\sigma, \bL^*_0, \epsilon, \bL_0, {\bf
U})$ to make clear \emph{ this context holds throughout the paper}. We
replace $\bL, \epsilon$ by $\bK, \delta$ for the explicit cases and when
the possibility of confusion is even stronger add further labels.
Identifying the parameters of the method in Context~\ref{hruclass}
clarifies the relations among the variants as those parameters are
instantiated differently at several points in this
 paper as well as in \cite{BaldwinPao,
BaldwinVer, BaldwinsmssIII}.

\begin{context}\label{hruclass}{\rm A \emph{ Hrushovski sm-class} depends on
the choice of a quintuple $(\sigma, \bL^*_0, \epsilon, \bL_0, {\bf U})$ of
parameters.

\begin{enumerate}
\item The  vocabulary  $\sigma$ contains only relation  and constant
    symbols.

\item $\bL^*_0$ is a countable $\forall$-axiomatizable collection of
    finite $\sigma$-structures.

    \item A \emph{ pre-dimension} $\epsilon$ is a function from $\bL^*_0$
        to  the integers $\ZZ$ that, with $A \subseteq B$, writing
        $\epsilon(A/B)$ for $\epsilon(A) - \epsilon(B)$
satisfies:
  \begin{enumerate}

\item $\epsilon$ is submodular: That is, if $A, B, C \subseteq D \in
    \bL^{*}_0$, with $A \cap C = B$, then:
    \[\epsilon(A/B) \geq \epsilon(A/C),\] which an easy calculation shows
    is equivalent to submodularity: \[\epsilon(A\cup C) \geq
    \epsilon(A) + \epsilon(C) - \epsilon(B).\]
    \end{enumerate}

\item     $\bL_0$ is a subset of $\bL^*_0$ defined using $\epsilon$.
    Here, $\bL_0$ is those $A \in \bL^*_0$ such that    for any   subset
    $A'$ of $A$, $\epsilon(A') \geq 0$.

\end{enumerate}}
\end{context}

$\bU$ requires some preparation.

 Requirement (3) that $\epsilon$ maps into $\ZZ$ slightly weakens the
result in   Baldwin and Shi \cite{BaldwinShiJapan} that  well-ordering of
the range suffices to get an $\omega$-stable generic model with  a geometry
rather than just a dependence notion. They show that by allowing real
coefficients one obtains a stable theory with the forking relation as
dependence. From such an $\epsilon$, one defines notions of strong
extension ($\leq$), primitive extension, and good pair.

\begin{definition}[Strong Extensions]\label{stext}
\begin{enumerate}
\item In any $N \in \hat\bL_0$, $$d_N(A/B) = \min\{\epsilon(A'/B): B
    \subseteq A \subseteq A' \subseteq N\}.$$

     We often write $d_N(A-B/B)$ for $d_n(A/B)$.

    \[d_N(A) = d_N(A/\emptyset).\]
    \item For any $N \in \hat\bL_0$ with $B \subseteq N$, we write $B
        \leq N$ (read $N$ is a \emph{ strong extension} of $B$) when  $B
        \subseteq A\subseteq N$ implies $d_N(A) \geq d_N(B)$.

\item We write $B < A$ to mean that $B \leq A$ and $B$ is a proper subset
    of $A$.

\end{enumerate}
\end{definition}

The following definitions describe the pairs $B \subseteq A$ such that, in
the generic model constructed from the class $\bL_0$, $A$ will be contained
in the algebraic closure of $B$.  \emph{ We  write $(A-B/B)$ or $(A/B)$ for
the same pair depending on whether the superset or the annulus is
emphasized.}

\begin{definition}[Primitive and Good]\label{prealgebraic} Let $B, C \in \bL_0$ with $B \cap C= \emptyset$ and $C \neq \emptyset$.
Write $A$ for $B \cup C$.
	\begin{enumerate}
\item $A$ is a \emph{ $k$-primitive extension} of/over $B$ if $B \leq A$,
    $\epsilon(A/B) =k$, and there is no $A_0$ with $B \subsetneq A_0
    \subsetneq A$ such that $B \leq A_0 \leq A$. We may just write \emph{
    primitive} when $k=0$.

 We stress that in this definition, while $B$ may be empty, $A$ cannot
 be.
	\item  We say that the $0$-primitive extension $A/B$ is \emph{ good} if
there is no $B' \subsetneq B$ such that  $(A/B')$ is $0$-primitive.
(Hrushovski called this a minimal simply algebraic or m.s.a.
extension.)

	\item If $A$ is $0$-primitive over $B$ and $B' \subseteq B$ is such
that we have that $A/B'$ is good, then we say that $B'$ is a \emph{ base}
for $A$

	\item If  $A/B$ is good, then we also write $(A/B)$ is a \emph{ good pair}.
\end{enumerate}
\end{definition}

Hrushovski gave one technical condition on the function $\mu$ counting the
number of realizations of a good pair that ensured the theory is strongly
minimal rather than $\omega$-stable of rank $\omega$.  Fixing a class ${\bf
U}$ of functions $\mu$ satisfying that condition in the base case and other
conditions for special purposes provides a way to index a rich group of
distinct constructions. At various times in this paper ${\bf U}$ is
instantiated as $\Uscr$, $\Uscr'$, $\Cscr$,  or $\Tscr$.

	\begin{definition}[$\mu$ and $U$]    \label{Kmu}	We describe the
functions that impose algebraicity.
	\begin{enumerate}

	\item\label{itemLmu} Let $\bU$ be  collection of functions $\mu$
assigning to every isomorphism type $\boldsymbol{\beta}$ of a good pair
$(C/B)$ in $\bL_0$ a non-negative integer. her good pairs may be allowed
at least twice as many realizations

\item For any good pair $(C/B)$ with $B \subseteq M$ and $M \in \hat
    \bL_0$, $\chi_M(C/B)$ denotes the number of disjoint copies of $C$
 over $B$ in $M$. Of course,  $\chi_M(C/B)$ may be $0$.

	\item\label{Kmuitem} For any $\mu \in \bU$,  $\bL_{\mu}$ is the class
of structures $M$ in $ \bL_{0}$ such that if $(C/B)$ is a good pair
$\chi_M(C/B)  \leq \mu(C/B)$.

\end{enumerate}	
\end{definition}

\begin{definition}\label{gendef} A  countable structure $\Gscr_\mu$ is \emph{ generic} for $\bL_\mu$ if
\begin{enumerate}
\item it is a countable union of structures in $\bL_\mu$;
\item and it is $\leq$-homogenous: if  isomorphic finite $A,B$ are each
    strong in $\Gscr_\mu$, they are automorphic in $\Gscr_\mu$.
\end{enumerate}
\end{definition}

\begin{theorem}
\label{Hrugen} If  $(\bL_\mu,\leq)$ is a smooth class with the amalgamation
property then it has a countable generic model $\Gscr_\mu$.
\end{theorem}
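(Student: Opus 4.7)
The plan is to derive Theorem~\ref{Hrugen} as a corollary of Theorem~\ref{KL}: both smoothness and amalgamation are already in the hypotheses, so the only missing ingredient is the joint embedding property (JEP). Once JEP is in hand, Theorem~\ref{KL} produces a countable $\Gscr_\mu$ that is a $\leq$-increasing union of members of $\bL_\mu$ and is $\leq$-homogeneous in the sense of Definition~\ref{gendef}, which is exactly what is required.

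To extract JEP from amalgamation, I would use the minimal $\sigma$-structure as a common base. Since by Context~\ref{hruclass}(1) the vocabulary $\sigma$ contains only relation and constant symbols, there is a well-defined minimal $\sigma$-structure $\emptyset$ (the interpretations of the constants, or the empty set if none). Under the standard normalization $\epsilon(\emptyset)=0$, one has $\emptyset \in \bL_0$, and trivially $\emptyset \in \bL_\mu$ because no good pair $(C/B)$ can have $C \subseteq \emptyset$. For every $M \in \bL_\mu$, the defining property of $\bL_0$ forces $\epsilon(A') \geq 0$ on each finite $A' \subseteq M$, so $d_M(A) \geq 0 = d_M(\emptyset)$ whenever $\emptyset \subseteq A \subseteq M$; hence $\emptyset \leq M$. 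Given $M, N \in \bL_\mu$, apply AP to the strong span $M \geq \emptyset \leq N$ to produce an amalgam $P \in \bL_\mu$ with $M \leq P$ and $N \leq P$. This is JEP.

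Invoking Theorem~\ref{KL} for the smooth class $(\bL_\mu,\leq)$ now yields the countable generic $\Gscr_\mu$. As a sanity check on the conclusion, I would unwind what genericity of $\Gscr_\mu$ means inside the proof of Theorem~\ref{KL}: enumerate up to isomorphism all strong extensions $A \leq B$ with $A \in \bL_\mu$ finite, and build a $\leq$-chain $M_0 \leq M_1 \leq \cdots$ in $\bL_\mu$ using AP so that at stage $n$ every relevant pending extension is realized; the union is a countable member of $\hat\bL_0$ satisfying the two clauses of Definition~\ref{gendef}.

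The step most likely to be the real obstacle is the uniform claim that $\emptyset \leq M$ for all $M \in \bL_\mu$, which depends on the normalization $\epsilon(\emptyset)=0$ together with the $\bL_0$-constraint $\epsilon(A') \geq 0$. This is automatic in every instantiation used in this paper and in \cite{BaldwinPao, BaldwinVer}, so the reduction to Theorem~\ref{KL} is clean; if one worked instead in a variant where the base structure had nonzero pre-dimension, one would have to replace $\emptyset$ by any fixed $A_0 \in \bL_\mu$ with $A_0 \leq M$ for all $M$ in the class and rerun the argument over $A_0$.
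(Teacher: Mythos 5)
Your proposal is correct and follows exactly the route the paper intends: Theorem~\ref{Hrugen} is stated without proof as an instance of Theorem~\ref{KL}, with the joint embedding property supplied by amalgamating over the empty (or constant-generated) structure, which is strong in every member of $\bL_\mu$ since $\epsilon(A')\geq 0$ for all $A'$ and $\epsilon(\emptyset)=0$. Your explicit verification that $\emptyset\in\bL_\mu$ and $\emptyset\leq M$ fills in the only step the paper leaves implicit, and your caveat about the normalization is exactly the right thing to flag.
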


Smoothness is immediate for Hrushovski and for \cite{BaldwinPao} as $L_\mu$
and $\leq$ are given by universal sentences; Section~\ref{findqg} requires
more care (Theorem~\ref{getsmquasigrp}) because the class of finite
structures is $\forall\exists$ axiomatizable. Proofs that the theory of
$\Gscr_\mu$ is strongly minimal depend slightly on the particular instance
of the schema described in this section; several such instantiations are
explained in Sections~\ref{genspace} and \ref{findqg}.

\begin{notation}
    The theory of the generic structure, $\Gscr_\mu$, is the desired
    strongly minimal theory $T_\mu$.
\end{notation}

\subsection{Generic Linear Spaces}\label{genspace}

The construction of strongly minimal Steiner systems \cite{BaldwinPao}
takes place with the  instantiation in Definition~\ref{BPsetting} for
linear spaces of the pattern described in Context~\ref{hruclass}.

Formalizing the initial description,

 \begin{definition} A linear space is a
structure $(M,R)$ with a single ternary relation that is set-like (holds
only of distinct tuples and in any order or none) and one further axiom:
two points determine a line.
\end{definition}

\begin{definition}\label{lines}  We say a \emph{ maximal} $R$-clique in a
linear space $M$ is a line (block) and sometimes   write (partial) line for
a clique that is not maximal.  Note that if $B \subset A \subseteq M$ then
a maximal clique in $B$ may not be maximal in $A$.
 \begin{enumerate}

 \item Two unrelated points in a linear space $(M,R)$ are regarded as
     being on a trivial line. A non-trivial line is any $R$-clique of at
     least $3$-points. In a $k$-Steiner system, every line has $k>2$
     points and so is non-trivial.

 \item  For a line (maximal clique) $\ell \subseteq B$,  we denote the
     cardinality of a line $\ell$ by $|\ell|$, and, for $A \subseteq B$,
     we denote by $|\ell|_A$ the cardinality of $\ell \cap A$.
   \item We say that a non-trivial line $\ell$ contained in $B$ is \emph{
       based in} $A \subseteq B$ if $|\ell\cap A| \geq 2$, in this case
       we write $\ell \in L(B)$.
\item	\label{nullity} The \emph{ nullity of a line} $\ell$ contained in
    a linear space   $A$ is:
\[\mathbf{n}_A(\ell) = |\ell| - 2.\]
\end{enumerate}

 \end{definition}

  We deduce the notions of $d_N, \leq$, primitive,  and good,    exactly as
in Section~\ref{gencon} from the following specification of $\sigma,
\bL^*_0,\delta$.

\begin{definition}\label{BPsetting}
\begin{enumerate}
\item $\sigma \rightarrow \tau$: $\tau$ has a single ternary relation,
    $R$.

\item $\bL^*_0 \rightarrow \bK^*_0$:  $\bK^*_0$ is the class of finite
    linear spaces. In particular,  $R$ can hold only of three distinct
    elements and then in any order (i.e., is a $3$-hypergraph).

    \item $\epsilon \rightarrow \delta$: $\delta(A) = |A| - \sum_{\ell
        \in L(A)} \mathbf{n}_A(\ell)$ where $\mathbf{n}_A$ is defined in
Definition~\ref{Kmu}.

\item $\bL_0 \rightarrow \bK_0$: $\bK_0 = \{ A \in \bK^*_{0} \text{ such
    that for any } A' \subseteq A, \delta(A') \geq 0\}$.
\item $\bU \rightarrow \Uscr$:

\begin{enumerate} \item We write $\boldsymbol{\alpha}$ for the isomorphism type of a
     pair of sets $(\{b_1,b_2\},\{a\})$ with $R(b_1,b_2,a)$. (That is,
     rather than repeating the elements of the base by writing ($\{a,
     b_1,b_2\}/\{b_1,b_2\}$), we simply separate the two pieces of the
     diagram of the larger set.) $(\{b_1,b_2\},\{a\})$ will be a good
     pair in each example considered.
    \item\label{itemKmu} Let $\Uscr$ be the collection of functions $\mu$
        assigning to every isomorphism type $\boldsymbol{\beta}$ of a
        good pair $C/B$ in $\bL_0$:
	\begin{enumerate}
	\item a natural number $\mu(\boldsymbol{\beta}) = \mu(C/B) \geq \epsilon(B)$, if $|C-B|\geq 2$;
	\item a natural number $\mu(\boldsymbol{\beta}) \geq 1$, if $\boldsymbol{\beta} = \boldsymbol{\alpha}$.
	\end{enumerate}

 \end{enumerate}
 \end{enumerate}
\end{definition}

The special treatment of $\boldsymbol{\alpha}$ is to allow the
consideration of Steiner $3$-systems. Note that in
Definition~\ref{BPsetting}, the class $\bK^*_0$ is $\forall$-axiomatizable;
in Section~\ref{findqg}, we will need a $\forall\exists$ class for the
relevant instantiation $\tilde \bK_q$ of $\bL_0$.

\cite[Lemma 3.10.3]{BaldwinPao} demonstrates the class $\bK_0$ satisfies
amalgamation with the following construction of the canonical amalgam.

\begin{definition}\cite[Definition 3.7]{BaldwinPao}\label{defcanam} Let $A \cap B =C$
 with $A,B,C \in \bK_0$.
We define $D := A \oplus_{C} B$ as follows:

\begin{enumerate}
	\item the domain of $D$ is $A \cup B$;
	\item a pair of points  $a \in A - C$  and $b \in B - C$ are on a
non-trivial line $\ell'$ in $D$ if and only if there is line $\ell$ based
in $C$ such that $a \in \ell$ (in $A$) and $b \in \ell$ (in $B$). Thus
$\ell'=\ell$ (in $D$).
\end{enumerate}
\end{definition}

Baldwin and Paolini \cite{BaldwinPao} demonstrate  the class
$(\bK_0,\delta)$ satisfies the basic properties (including flatness) of a
$\delta$ function in a Hrushovski construction and of the associated
algebraic closure geometry. For $M \models T_\mu$, $a\in \acl(B)$ if and
only $d_M(a/B) =0$. The flatness implies that no model of $T_\mu$
(Fact~\ref{disp2a}) admits a definable binary associative function with
infinite domain (\cite[Lemma 14]{Hrustrongmin}).

 The following lemma singles out the effect of the fact that our  $\delta$
  (Definition~\ref{BPsetting}.(3)) depends on line length rather than the
  number of number of tuples realizing  $R$.

\begin{fact}[Line length]\label{linelength}
By Lemma 5.18

of \cite{BaldwinPao}, lines in models of $T_\mu$ have length $k$ if and only
if $\mu(\boldsymbol{\alpha}) = k-2$.
 \end{fact}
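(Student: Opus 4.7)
The plan is to show that in $\Gscr_\mu$ every line has length exactly $\mu(\boldsymbol{\alpha})+2$, from which the biconditional follows (uniform line length is a first-order property, so it transfers to every model of $T_\mu$). The bridge between lines and $\mu$ is the observation that the line through any two points $b_1,b_2$ of $\Gscr_\mu$ consists of $\{b_1,b_2\}$ together with all $a$ satisfying $R(b_1,b_2,a)$, and each such $a$ is a disjoint realization over $B=\{b_1,b_2\}$ of the good pair $\boldsymbol{\alpha}=(\{b_1,b_2\},\{a\})$ of Definition~\ref{BPsetting}. Thus $|\ell|=\chi_{\Gscr_\mu}(\boldsymbol{\alpha})+2$, with $\chi$ as in Definition~\ref{Kmu}.

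The upper bound is immediate. Since $\Gscr_\mu$ is a countable union of members of $\bK_\mu$ and $\chi$ is monotone along that union, the defining clause of $\bK_\mu$ in Definition~\ref{Kmu} gives $\chi_{\Gscr_\mu}(\boldsymbol{\alpha})\leq\mu(\boldsymbol{\alpha})$, so $|\ell|\leq\mu(\boldsymbol{\alpha})+2$.

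For the lower bound, suppose for contradiction some line $\ell$ through $b_1,b_2$ in $\Gscr_\mu$ has $|\ell|=m+2<\mu(\boldsymbol{\alpha})+2$. Pick a finite $A\leq\Gscr_\mu$ containing $\{b_1,b_2\}$ and all $m$ other points of $\ell$; such $A$ exists since $\Gscr_\mu$ is by construction the union of a chain of finite strongly embedded $\bK_\mu$-structures. Form $A^+:=A\cup\{a^*\}$ by adding one fresh point with the single new relation $R(b_1,b_2,a^*)$. A direct computation using the formula $\delta(S)=|S|-\sum_{\ell'\in L(S)}\mathbf{n}_S(\ell')$ of Definition~\ref{BPsetting}.(3) shows that for every $S\subseteq A^+$ containing $\{b_1,b_2\}$, adding $a^*$ raises both $|S|$ and the nullity of the one line based in $\{b_1,b_2\}$ by exactly $1$, so $\delta(S)$ is unchanged; subsets of $A^+$ missing $b_1$ or $b_2$ simply acquire an isolated point. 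Hence $A\leq A^+\in\bK_0$. The only good pair whose realization count changes from $A$ to $A^+$ is $\boldsymbol{\alpha}$ over $\{b_1,b_2\}$, passing from $m$ to $m+1\leq\mu(\boldsymbol{\alpha})$, so $A^+\in\bK_\mu$. By $\leq$-homogeneity of $\Gscr_\mu$ (Definition~\ref{gendef}), $A^+$ embeds over $A$ into $\Gscr_\mu$, placing a new point on $\ell$ — contradicting its maximality.

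The two bounds force $|\ell|=\mu(\boldsymbol{\alpha})+2$ for every line of $\Gscr_\mu$, and hence of every model of $T_\mu$, which is the claimed equivalence. The main delicacy is the verification that $A^+\in\bK_\mu$: one must check that \emph{no} good pair other than $\boldsymbol{\alpha}$ over $\{b_1,b_2\}$ gains a new realization. This is true because $a^*$ appears in only one $R$-relation, so any good pair in $A^+$ not already present in $A$ must involve $a^*$ and must therefore be a single-point extension over $\{b_1,b_2\}$, i.e., a copy of $\boldsymbol{\alpha}$.
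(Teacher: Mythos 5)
The paper itself gives no proof here --- the statement is imported as Lemma 5.18 of \cite{BaldwinPao} --- but your overall strategy (count the points of a line through $b_1,b_2$ as disjoint realizations of the good pair $\boldsymbol{\alpha}$, get the upper bound from the defining condition of $\bK_\mu$ and the lower bound from a one-point extension plus genericity) is exactly the standard argument, and the upper-bound half is correct as written.

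The lower-bound half has a genuine flaw: the structure $A^+=A\cup\{a^*\}$ with ``the single new relation $R(b_1,b_2,a^*)$'' is not a linear space whenever $\ell$ already has a third point $c$. In that case $\{b_1,b_2,c\}$ and $\{b_1,b_2,a^*\}$ are two distinct maximal $R$-cliques through $b_1,b_2$ (since $R(b_1,c,a^*)$ fails), violating ``two points determine a line,'' so $A^+\notin\bK^*_0$ and the whole membership-in-$\bK_\mu$ discussion is moot. Your own $\delta$-computation betrays the confusion: you speak of raising ``the nullity of the one line based in $\{b_1,b_2\}$,'' which presupposes that $a^*$ has joined the clique $\ell$, but a single $R$-triple does not accomplish that. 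The repair is to put $R$ on \emph{every} triple from $\ell\cup\{a^*\}$; then $\delta(A^+)=\delta(A)$ and $A\leq A^+\in\bK_0$ as you intend. But this repair invalidates your closing justification that ``$a^*$ appears in only one $R$-relation'': now $(\{c,d\},\{a^*\})$ is a new realization of $\boldsymbol{\alpha}$ over \emph{every} pair $c,d\in\ell$ (harmless, since each such count goes from $m$ to $m+1\leq\mu(\boldsymbol{\alpha})$), and you must separately rule out that some good pair $(C/B)$ with $a^*\in C$ and $|C|\geq 2$ gains a realization --- e.g.\ by observing that any such candidate has an intermediate strong substructure (adjoin the points of $C\cap\ell$ first), so it is not $0$-primitive. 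With those two corrections the argument goes through.
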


\subsection{Interpretations } \label{coord}

  We carefully define the concept of interpretation as given in
\cite{Hodgesbook}. To bridge the several fields considered  here we write
these definitions in the notation of this paper. In Section~\ref{assqg} the
Ganter-Werner notion of `coordinatizing' Steiner systems by quasigroups is
seen as a specific kind of interpretation.

By a \emph{ vocabulary} (alias: similarity type, language, signature) we
mean a list of function and relation symbols.  A boldfaced variable
represents a finite sequence  of variables. A formula is \emph{ unnested}
if any atomic subformula  of it is either a single occurrence of a relation
symbol or
        an equality of terms containing only variables and at most one
        function symbol \cite[p 58]{Hodgesbook}.

\begin{definition}[Interpretations]\label{interpdef} Fix vocabularies $\tau$ and
$\sigma$. Fix also a $\tau$-structure $B$, a $\sigma$-structure $A$ and a
positive integer $n$. An $n$-\emph{
 dimensional} interpretation $\Gamma$ of $B$ into $A$ is:

\begin{enumerate}
\item
\begin{enumerate}
\item a $\sigma$-formula $\partial_{\Gamma}(x_0 \ldots x_{n-1})$ (the
    domain of the interpreted model).
  \item For each unnested atomic $\tau$-formula $\phi(y_0, \ldots
      y_{m-1})$,
       a $\sigma$-formula $\phi_\Gamma(\xbar)$ where $\xbar$ is an
      $m$-tuple of $n$-tuples.

        \item there is a surjective map $f_\Gamma \colon
            \partial_{\Gamma}(A^n) \rightarrow B$ such that for each
            unnested atomic $\tau$-formula $\phi(\ybar)$ and
          any $\abar_i \in \partial_{\Gamma}(A^n)$

            \[B \models \phi(f_\Gamma(\abar_0), \ldots
            f_\Gamma(\abar_{m-1}) ) \leftrightarrow A \models
            \phi(\abar_0, \ldots \abar_{m-1})\]
   \end{enumerate}
\item Note that 1a) and 1b) have established a function from the
    vocabulary of $\tau$ to formulas of $\sigma$; this extends by
    inductions on formulas to arbitrary formulas (See \cite[5.3.2,Remark
    1]{Hodgesbook}.  Such a map is called an interpretation of $\tau$
    into $\sigma$.

   \item An interpretation $\Gamma$ of a class $\mathfrak{S}$ of
       $\tau$-structures into a class $V$ of $\sigma   $-structures (or
       of the theories of such classes) is just an interpretation of
       $\tau$ into $\sigma$  \cite[(b) p 221]{Hodgesbook}. This rather
       weak correspondence becomes useful when various properties in
       Definition~\ref{interpprop} hold. .
\end{enumerate}
\end{definition}
  In this paper theory means first order theory; we will sometimes specify
        the type of theory with such terms as variety, complete, or
        strongly minimal.

The natural admissibility conditions (expressed by a set of first order
formulas) assumed in Lemma~\ref{funct}.(2) are detailed at \cite[page
214]{Hodgesbook}; they allow for taking quotients in performing an
interpretation and yield the following result \cite[5.3.2,
5.3.4]{Hodgesbook}. We write   $\approx$ for isomorphism in the appropriate
        vocabulary.

\begin{lemma}\cite[Theorem 5.5.3]{Hodgesbook} \label{funct}
Suppose $\Gamma$ is  an $n$-\emph{ dimensional} interpretation of the
$\tau$-structure $B$ into the  $\sigma$-structure $A$.
\begin{enumerate}
\item Definition~\ref{interpdef}.(1).(c) holds for \emph{ all}
    $\tau$-formulas $\phi$.
    \item For every $\sigma$-structure $A$ which satisfies the
        admissibility conditions there is a $\tau$-structure $B$ and a
        map $f=f_\Gamma$ with $f_\Gamma\colon\partial_{\Gamma}(A)
        \rightarrow B$ such that:

        \begin{enumerate}
\item If $g$ and $C$ are such that $\Gamma$ with
    $g\colon\partial_\Gamma(A) \rightarrow$ is also an interpretation
    of $C$ into $A$ then there is an isomorphism $i\colon B \rightarrow
    C$ such that $i(f\abar) = g(\abar)$ for every $\abar \in
    \partial_{\Gamma}(A^n)$.
    \item We write $\Gamma A$ for the isomorphism class of  $B \approx
        C$.
    \end{enumerate}
    \end{enumerate}
    \end{lemma}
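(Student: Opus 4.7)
The plan is to follow the standard proof from Hodges's textbook, since the lemma is attributed there and no additional content is being claimed. Part (1) proceeds by induction on the complexity of $\tau$-formulas, using that Definition~\ref{interpdef}.(1)(b) already provides the translation on unnested atomic formulas. For Boolean connectives, set $(\phi \wedge \psi)_\Gamma := \phi_\Gamma \wedge \psi_\Gamma$ and $(\neg \phi)_\Gamma := \neg \phi_\Gamma$; for the existential quantifier, translate $(\exists y)\phi(y,\ybar)$ by $(\exists \xbar)(\partial_\Gamma(\xbar) \wedge \phi_\Gamma(\xbar, \ybar))$, where $\xbar$ is the $n$-tuple coding the single element $y$. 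The preservation clause in Definition~\ref{interpdef}.(1)(c) then lifts to all $\tau$-formulas by the same induction, the existential case using the surjectivity of $f_\Gamma$.

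For part (2), I would construct $B$ canonically as a quotient. The admissibility conditions guarantee that the $\sigma$-formula $(x = y)_\Gamma$ defines an equivalence relation $\sim_\Gamma$ on $\partial_\Gamma(A^n)$ and that each translated atomic $\tau$-formula (including graphs of function symbols) is a congruence for $\sim_\Gamma$. Set $B := \partial_\Gamma(A^n)/\!\sim_\Gamma$, let $f_\Gamma$ be the quotient map, and induce the $\tau$-structure on $B$ by declaring $B \models R(f_\Gamma(\abar_0),\dots,f_\Gamma(\abar_{m-1}))$ iff $A \models R_\Gamma(\abar_0,\dots,\abar_{m-1})$, and analogously for the value of each function symbol. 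The congruence conditions ensure this is well-defined.

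For the universality in (2)(a), suppose $g\colon \partial_\Gamma(A) \to C$ also realizes $\Gamma$. The compatibility of $g$ with the formula $(x=y)_\Gamma$ forces $g(\abar) = g(\abar')$ exactly when $A \models (x=y)_\Gamma(\abar,\abar')$, which is exactly when $f_\Gamma(\abar) = f_\Gamma(\abar')$. Hence the assignment $i(f_\Gamma(\abar)) := g(\abar)$ is a well-defined bijection $B \to C$, and since $f_\Gamma$ and $g$ satisfy the same preservation clauses for every atomic $\tau$-formula, $i$ is an isomorphism of $\tau$-structures. The isomorphism class is then well-defined, justifying the notation $\Gamma A$.

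The argument is essentially bookkeeping, so I do not expect a serious obstacle. The only slightly delicate step is verifying that the admissibility axioms really do give both the equivalence and the congruence conditions needed to make the quotient $\tau$-structure well-defined, but this is exactly the content of the admissibility formulas listed on \cite[p.~214]{Hodgesbook}, and I would simply invoke that list.
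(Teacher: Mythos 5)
Your proposal is correct and follows exactly the argument the paper relies on: the paper gives no proof of its own but simply cites Hodges (Theorems 5.3.2--5.3.4 and 5.5.3), and your reconstruction --- induction on formulas starting from the unnested atomic translations for part (1), and the quotient of $\partial_\Gamma(A^n)$ by the interpreted equality $(x=y)_\Gamma$ with the admissibility conditions supplying the equivalence and congruence requirements for part (2) --- is precisely the content of that cited argument. The one point worth making explicit is that the induction in (1) should pass through the standard unnesting lemma (every $\tau$-formula is equivalent to an unnested one), since Definition~\ref{interpdef}.(1)(b) only supplies translations for unnested atomic formulas; otherwise there is nothing to add.
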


The interpretations constructed in this paper will be $1$-dimensional,
indeed on the same domain. But, Theorem~\ref{bvnobin} shows not even
$n$-dimensional interpretations are possible in the other direction. Hodges
introduced the following terminology to detail which additional properties
of an interpretation were important for applications to decidability,
consistency, model theoretic complexity, etc. In particular they will allow
us in Section~\ref{assqg} to clarify the strength and weaknesses of
coordinatization.

\begin{definition}[Properties of interpretations]\label{interpprop} Let $\Gamma$ be an
       interpretation of a class $\mathfrak{S}$ of $\tau$-structures into a
       class $V$ of $\sigma$-structures.
\begin{enumerate}
\item $\Gamma$ is \emph{ left total} if for every $A \in V$, $\Gamma A\in
    \mathfrak{S}$.
\item $\Gamma$ is \emph{ right total} if for every $B \in \mathfrak{S}$,
    there is an $ A\in V$ with $\Gamma A \approx B$.
    \item $\Gamma$ is \emph{   total} if it is both  left and right
        total.

     \end{enumerate}

\end{definition}

\begin{definition}[
     Relations of Classes]\label{mutbi}

\begin{enumerate}
\item Two classes of structures are mutually interpretable if each is
    interpreted in the other. \item They are bi-interpretable if in
    addition the composition of the two interpretations (in either
    direction) is the identity.
 \end{enumerate}

\end{definition}

\begin{remark}[One sorted formalization]\label{1sort}  Since each sort is infinite,
no theory in  the two-sorted formulation of `linear space' can  be strongly
minimal. However, we showed in Section 2 of \cite{BaldwinPao} that  there
is a ($2$-dimensional)  left and right total bi-interpretation between
linear spaces in the two sorted formalization and linear spaces in a
one-sorted logic  with a single ternary `collinearity' predicate.
Fact~\ref{disp2}.1 applied the Hrushovski method to linear spaces in the
one-sorted framework and using a geometrically motivated predimension
function (Definition~\ref{BPsetting}.(3)) produced strongly minimal Steiner
$k$-systems that are model complete and satisfy the usual properties of
counterexamples to Zilber's trichotomy
 conjecture. Their $\acl$-geometries are flat, but  not disintegrated  nor
 locally modular.

 \end{remark}

 \section{Associating  Strongly Minimal Steiner systems with
 Quasigroups}\label{assqg} \numberwithin{theorem}{section}

We summarise and extend the substantial literature on coordinatization of
$k$-Steiner systems to show $T_\mu$ is interpretable into a theory $\check
T_{\mu,V}$ of quasigroups. We note in Lemma~\ref{defsq} that Steiner
$3$-systems are quasigroups. Then we give a short proof that the `natural'
coordinatizing quasigroup provided by \cite{GanWer} is not definable in the
strongly minimal $k$-Steiner system $(M,R)$ when $k> 3$. Thus, one can't
`invert' the coordinatization to obtain an interpretation of the quasigroup
into the Steiner system.

 Then we  deduce from the argument of Baldwin-Verbovskiy \cite{BaldwinVer}
Theorem~\ref{bvnobin} that under a weak hypothesis ($\mu$-triples,
Definition~\ref{tripdef}) no binary function with domain $M^2$ is
interpretable  in any $(M,R) \models T_\mu$.

 \begin{notation}\label{Steinersysnot}
For each $k \in \omega$, $\mathfrak{S}^k$ denotes the class   of Steiner
$k$-systems. If we write $q$, we mean a prime power.
\end{notation}

\cite{GanWer} use the notation $\mathfrak{S}^m_k$; for them $m$ is block
size and $k$ is the number of points that determine a line. By restricting
to linear spaces we have fixed that $k$ as $2$ and need only a single
parameter, which we superscript.

\begin{definition}[\cite{Smithlec}]\label{quasigroups}
\begin{enumerate}
\item A structure $(A,*)$ with one binary function $*$ is called a \emph{
    groupoid }(or \emph{ magma}).

\item A \emph{ quasigroup} (Alias:  multiplicative quasigroup
    \cite{McNetal}, combinatorial quasigroup \cite{Smithlec})   $(Q,*)$
    is a groupoid $(Q,*)$ such that for $a,b\in  Q$, there exist unique
    elements $x,y
\in Q$ such that both \[a *x = b \ \text{and} \ y * a = b.\]

\item If every $2$-generated subquasigroup has $q$-elements it is a
    \emph{ $q$-quasigroup}.

\end{enumerate}
\end{definition}

 The general notion of a quasigroup  is an $AE$ Horn class in the
     vocabulary with function symbol $*$. So in general, a  quotient or a
subalgebra of a quasigroup $(Q,*)$ need not be a quasigroup.  But
Quackenbush provides a sufficient condition that every algebra in the
variety generated by $Q$ is a quasigroup.

\begin{theorem} \cite[Theorem 3]{Quackquac} If $(Q,*)$ is a quasigroup  with $V =(HSP(Q))$ and
$F_2(V)$, the free $V$-algebra on 2 generators, is finite then every
algebra in $V$ is a quasigroup.
 \end{theorem}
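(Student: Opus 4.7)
The plan is to reduce ``every algebra in $V$ is a quasigroup'' to finding binary terms $\ell(x,y), r(x,y)$ in the one-operation language $\{*\}$ that witness division, namely so that
\[
x * \ell(x,y) = y, \quad \ell(x, x*y) = y, \quad r(y,x)*x = y, \quad r(y*x, x) = y
\]
all hold in $V$. Once these identities hold, any $A \in V$ is a quasigroup: given $a,b \in A$, the element $\ell(a,b)$ solves $a*z = b$, and if $a*z = b$ then $z = \ell(a,a*z) = \ell(a,b)$ gives uniqueness, symmetrically for $z*a = b$ via $r$.

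First I would invoke the standard description of the free algebra in $V = HSP(Q)$: $F_2(V)$ embeds into $Q^{Q^2}$ as the subalgebra generated by the two coordinate projections $\pi_0, \pi_1$. This works because an identity $s=t$ in two variables holds in $V$ iff it holds in $Q$ iff the terms $s,t$ define equal functions $Q^2\to Q$, i.e., equal elements of $Q^{Q^2}$.

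Next, exploit that $Q$ is a quasigroup, so left translation $L_q\colon z \mapsto q*z$ is injective on $Q$ for every $q$. Injectivity passes to $Q^{Q^2}$ coordinatewise, hence restricts to injectivity of $L_a\colon F_2(V) \to F_2(V)$ for every $a \in F_2(V)$. By hypothesis $F_2(V)$ is finite, so each such $L_a$ is bijective. Applied to $a = \pi_0$, we obtain $w \in F_2(V)$ with $\pi_0 * w = \pi_1$. Writing $w$ as the class of a term in the free generators yields a binary term $\ell(x,y)$ realizing the identity $x*\ell(x,y) = y$ in $F_2(V)$, hence in $V$.

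The companion identity $\ell(x, x*y) = y$ then holds automatically in $Q$: from $a*\ell(a,b) = b$ we read $\ell(a,b) = L_a^{-1}(b)$, so $\ell(a, a*c) = L_a^{-1}(L_a c) = c$; this transfers to all of $V$ since $V$ and $Q$ satisfy the same identities. The symmetric argument using right translations $R_q$ produces the term $r(x,y)$ together with its two right-cancellation identities. The main technical point is the first one: embedding $F_2(V)$ into a power of $Q$ and combining finiteness with injectivity of translations to extract $\ell$; once that term exists, the remaining identities are immediate verifications in $Q$.
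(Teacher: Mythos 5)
Your proof is correct and follows essentially the same route as the paper's sketch (Remark 3.4): realize $F_2(V)$ inside a power of $Q$ so that it inherits the cancellation laws, then use finiteness to upgrade injective translations to bijections and extract the division witnesses. You carry the argument one useful step further than the paper's remark by converting the witnesses into binary terms $\ell, r$ whose identities hold throughout $V$, which is exactly what is needed to conclude that \emph{every} algebra in $V$ (not just $F_2(V)$) is a quasigroup.
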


 \begin{remark}\label{vocab} {\rm Quackenbush's argument makes an interesting use of the
 finiteness of $F_2(V)$. By standard arguments (forbidden footnote: Recall
 the operators $H,S,P$ taking a class of algebras $\bK$ to its homomorphic
 images, subalgebras, and direct products respectively.) $F_2(HSP(Q))$ is
 in $SP(Q)$ so it satisfies the cancellation laws. Thus, left and right
 multiplication are injective. But since injective maps of finite sets are
 onto, each equation $ax=b$ has a solution and $F_2$ is a quasigroup. So we
 can treat the quasigroups that arise as structures with one binary
operation and deal only with varieties.  This allows us to apply directly
the results of \cite{BaldwinPao}.}
 \end{remark}

  We discuss in detail three (families of) varieties of quasigroups
corresponding to $k=1,2, p^n$.

\begin{definition}\label{defrelvar}\cite{Smithlec}

 \begin{enumerate}
\item A \emph{ Steiner quasigroup} is a groupoid which satisfies the
    equations: $x \circ x = x, x \circ y = y \circ x, x \circ (x \circ y)
    = y $.
     \item A \emph{ Stein quasigroup}

         is a  groupoid which satisfies the  equations: $x*x =x$,
         $(x*y)*y = y*x$, $(y*x)*y = x$.

\item \emph{block algebras}: \cite{GanWer2} Let $q=p^n$ for some prime
    $p$ and natural number $n$.
 \begin{enumerate}
 \item A near-field is an algebraic structure in a vocabulary
     $(+,\times, 0,1)$
    satisfying the axioms for a division ring, except that it has only
    one of the two distributive laws.

 \item Given a near-field $(F,+,\cdot,-,0,1)$ of cardinality $q$ and a
     primitive element $a \in F$, define a multiplication $*$ on $F$ by
     $x*y = y+ (x-y)a$. An algebra $(A,*)$ satisfying the 2-variable
     identities of $(F,*)$ is a \emph{ block algebra} \cite{GanWer2}
     over $(F,*)$; $(F,*)$ is idempotent ($x*x =x$).

            \end{enumerate}

            \end{enumerate}
    \end{definition}

  While every group is a quasigroup, the  Stein and Steiner quasigroups are
rather special quasigroups since they are idempotent. Thus, a Stein  or
Steiner quasigroup $(Q,*)$ cannot be a group unless it has only one
element. Further, it routine to check from the defining equations that
there is a \emph{ unique} (up to isomorphism) $2$-generated Steiner (Stein)
quasigroup and it has $3$ ($4$) elements. So it is necessarily both simple
and free. Block algebras are quasigroups but when $q>4$ neither Stein nor
Steiner quasigroups.

 \begin{lemma}\label{defsq}

Each  Steiner \emph{ triple} system is bi-interpretable with a Steiner
    quasigroup (Definition~\ref{defrelvar}).

\end{lemma}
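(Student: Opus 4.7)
The plan is to exhibit explicit $1$-dimensional interpretations in each direction on the same underlying set and verify they compose to the identity.

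First I would define the operation. Given a Steiner triple system $(M,R)$, define $\circ$ on $M$ by the unnested $\tau$-formula
\[ z = x \circ y \iff (x = y \wedge z = x) \vee (x \neq y \wedge R(x,y,z)). \]
This is total and well-defined because in a Steiner $3$-system each pair of distinct points lies on a unique line of exactly three points. I would then check the three Steiner quasigroup axioms of Definition~\ref{defrelvar}: idempotence $x \circ x = x$ is immediate from the definition; commutativity $x \circ y = y \circ x$ follows because $R$ is set-like (symmetric); and the involutory identity $x \circ (x \circ y) = y$ splits into the trivial case $x = y$ and the case $x \neq y$, where setting $z = x \circ y$ gives $R(x,y,z)$, and then the unique line through $x$ and $z$ is $\{x,y,z\}$, so $x \circ z = y$. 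Finally I would verify that $\circ$ is indeed a quasigroup: both $a \circ x = b$ and $y \circ a = b$ have unique solutions $x = y = a \circ b$, obtained from the involutory identity together with commutativity.

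For the reverse interpretation, given a Steiner quasigroup $(Q,\circ)$, define
\[ R(x,y,z) \iff x,y,z \text{ are pairwise distinct and } z = x \circ y. \]
I would show $R$ is set-like: symmetry in $x,y$ is commutativity, and symmetry in $y,z$ follows from the involutory identity (if $z = x \circ y$ then $y = x \circ z$). Two distinct points $x,y$ determine exactly one line, namely $\{x, y, x \circ y\}$, once I verify $x \circ y \notin \{x,y\}$: if $x \circ y = x$, then applying $x \circ$ gives $y = x \circ (x \circ y) = x \circ x = x$, contradiction, and symmetrically for $y$. Here I use that there is a unique $2$-generated Steiner quasigroup of size $3$, so $\{x,y,x\circ y\}$ really has three elements.

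To conclude bi-interpretability in the sense of Definition~\ref{mutbi}, I would check that the two interpretations compose to the identity on the common universe: starting from $R$, passing to $\circ$, and then back to the defined ternary relation $R'$, one has $R'(x,y,z)$ iff $x,y,z$ are distinct and $z = x \circ y$, which unfolds to $R(x,y,z)$ by construction; and starting from $\circ$, passing to $R$ and back recovers $\circ$ because $x \circ x = x$ and, for $x \neq y$, $x \circ y$ is the unique $z$ with $R(x,y,z)$. The main subtlety to watch is that both directions are genuinely $1$-dimensional interpretations on the same domain (so no quotient or admissibility condition is needed), which makes the composition-is-identity check a direct verification rather than requiring Lemma~\ref{funct}; there is no real obstacle beyond confirming that the line-of-three structure of Steiner triple systems is exactly what encodes the third Steiner axiom.
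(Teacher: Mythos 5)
Your proposal is correct and follows essentially the same route as the paper: the paper's (much terser) proof also takes lines to be the $2$-generated subalgebras in one direction and defines $x\circ y$ as the third point of the line (with $x\circ x=x$) in the other, relying on the uniqueness of the $3$-element Steiner quasigroup. Your additional explicit verifications of the identities, the set-likeness of the defined $R$, and the composition-is-identity check are exactly the details the paper leaves to the reader.
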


\begin{proof}  Given the algebra, the lines are the 2-generated subalgebras,
which are easily seen from the defining equations to  have cardinality $3$.
Given a Steiner triple system, let $x \circ y$ be the third element of the
line if $x \neq y$ and $x \circ x = x$.  Since all lines are isomorphic to
the unique 3 element Steiner quasigroup, the resulting algebra is a Steiner
quasigroup. \end{proof}

In general, \cite{BaldwinPao} gave us a theory $T_\mu$ of Steiner
 $q$-systems for each prime power $q$; when $\mu(\boldsymbol{\alpha}) =1$
 we get a $3$-Steiner system and hence also a quasigroup.

\begin{corollary}\label{manyqg} For each prime power $q$, there are $2^{\aleph_0}$
strongly minimal theories $T_\mu$ of Steiner $q$-systems and so,  when
 $\mu(\boldsymbol{\alpha}) =1$, non-isomorphic (and even not elementarily
 equivalent) quasigroups of cardinality $\aleph_0$.
\end{corollary}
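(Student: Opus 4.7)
The plan is to combine Fact~\ref{disp2a} with Fact~\ref{linelength} and Lemma~\ref{defsq}.

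For the first clause, I invoke Fact~\ref{disp2a} with $k = q$ to obtain a family of strongly minimal theories $T_\mu$ of Steiner $q$-systems indexed by $\mu \in \Uscr$. The upper bound of $2^{\aleph_0}$ is automatic since $\Uscr$ consists of functions on the countable set of isomorphism types of good pairs into $\NN$. To realize this bound I use that the value $\mu(C/B)$ is first-order recoverable from $T_\mu$: in $\Gscr_\mu$, and hence in any $M \models T_\mu$, every strong copy of $B$ is extended by exactly $\mu(C/B)$ pairwise disjoint copies of $C$, since the $\mu$-bound is realized by the amalgamation step of the Hrushovski construction. Distinct $\mu$-values on a single good pair therefore yield non-elementarily-equivalent theories, and by varying $\mu$ over a family of size $2^{\aleph_0}$ (for instance, supported on an infinite collection of pairwise non-isomorphic good pairs compatible with line length $q$) one obtains continuum-many pairwise non-elementarily-equivalent $T_\mu$.

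For the second clause I specialize: requiring $\mu(\boldsymbol{\alpha}) = 1$, Fact~\ref{linelength} forces every line in any model of $T_\mu$ to have length $3$, so each such $T_\mu$ is the complete theory of a strongly minimal Steiner triple system. Lemma~\ref{defsq} then provides a bi-interpretation between each such system and a Steiner quasigroup in the sense of Definition~\ref{defrelvar}. Since bi-interpretation transfers elementary equivalence and isomorphism (a first-order sentence in one vocabulary translates through the interpretation into a first-order sentence in the other and conversely), the $2^{\aleph_0}$ pairwise non-elementarily-equivalent strongly minimal Steiner triple systems obtained above yield $2^{\aleph_0}$ pairwise non-elementarily-equivalent countable Steiner quasigroups.

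The step requiring the most care is the first-order detectability of $\mu(C/B)$. While this is a standard feature of Hrushovski's generic construction, the precise statement needed, namely that in $T_\mu$ every strong copy of $B$ has exactly $\mu(C/B)$ disjoint $C$-realizations rather than merely at most that many, should be cited from the relevant uniqueness and closure results in \cite{BaldwinPao} rather than re-derived here.
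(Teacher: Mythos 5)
Your proof is correct, and its second half (specialize to $\mu(\boldsymbol{\alpha})=1$, invoke Fact~\ref{linelength} to get line length $3$, then push non-elementary-equivalence through the bi-interpretation of Lemma~\ref{defsq}) is exactly what the paper does. Where you diverge is the first half: the paper's proof of Corollary~\ref{manyqg} simply cites \cite[Corollary 5.23]{BaldwinPao} for the existence of $2^{\aleph_0}$ pairwise non-elementarily-equivalent $T_\mu$, and the underlying argument there (as the paper notes when discussing the class $\Cscr$) goes through the cycle-graph invariants of \cite{CameronWebb}. You instead give a direct, self-contained counting argument via first-order recoverability of $\mu$ from $T_\mu$: the $\bK_\mu$-membership condition bounds $\chi_M(C/B)$ by $\mu(C/B)$ for \emph{every} copy of $B$ (not just strong ones), genericity realizes the bound over strong copies, and ``there is a copy of $B$ with at least $n$ disjoint copies of $C$ over it'' is an existential sentence, so $\mu(C/B)$ is the maximum $n$ for which that sentence lies in $T_\mu$. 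This is a standard and legitimate feature of the Hrushovski construction; you are right to flag that the realization of the bound (not merely the upper bound) is the step that must be sourced from the amalgamation/genericity lemmas of \cite{BaldwinPao} rather than asserted. Your route is arguably more elementary and does not need the combinatorial cycle-graph machinery; the paper's route buys a stronger payoff elsewhere (the cycle-graph argument survives the restriction of $\mu$ to $\Cscr$ or $\Cscr\cap\Tscr$, which matters for Proposition~\ref{nounary}). One further point worth making explicit in your write-up: with $q$ fixed, $\mu(\boldsymbol{\alpha})=q-2$ is frozen by Fact~\ref{linelength}, so the continuum of choices must come from the remaining good pairs, as you indicate parenthetically.
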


\begin{proof}  Lemma~\ref{defsq} provides an explicit $1$-dimensional (the domain and range of
the interpretation is the universe) bi-interpretation between Steiner
triple systems and the Steiner $3$-systems from Theorem~\ref{disp2}.1
\cite[Corollary 5.23]{BaldwinPao}.

\end{proof}

We examine the notion of coordinatization from Ganter and Werner
\cite{GanWer, GanWer2} and describe the methods of their proof of the
coordinatization of $\mathfrak{S}^q$ by varieties of quasigroups.  We then
adapt these methods in the remainder of the paper to analyze the possible
interpretations between strongly minimal Steiner systems and quasigroups.
We begin with a precise definition of `coordinatization'.

\begin{definition}[Coordinatization]\label{coordef}
\begin{enumerate}
 \cite{GanWer, GanWer2} \item  A Steiner $q$-system $(M,R)$

is \emph{ coordinatized by a quasigroup}  $(M,*)$ if the lines of $(M,R)$
are the $2$-generated subalgebras of $(M,*)$.
\item The class $\mathfrak{S}^q$ of Steiner systems is \emph{
    coordinatized by a variety} $V$ if each $(M,R) \in \mathfrak{S}^q$ is
    coordinatized by an $(M,*)\in V$.
\end{enumerate}

\end{definition}

The proof of Lemma~\ref{Steinq} motivates the generalization in
Lemma~\ref{directgammaA}.

 \begin{lemma}
 [Stein quasigroups \& Steiner $4$ systems] \cite[page 5]{GanWer2}
\label{Steinq} Each Stein quasigroup induces a Steiner  $4$-system $(M,R)$.
Moreover, each Steiner  $4$-system $(M,R)$ is coordinatized by a Stein
quasigroup, $(M,*)$.
\end{lemma}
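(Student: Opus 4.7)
The plan for the first direction (Stein quasigroup $\Rightarrow$ Steiner $4$-system) is to compute explicitly the $2$-generated subalgebra $\langle x, y \rangle$ for distinct $x, y$ and show it equals $\{x, y, x*y, y*x\}$, with all four elements distinct. I would iteratively apply the Stein identities $(x*y)*y = y*x$ and $(y*x)*y = x$, under various substitutions, together with idempotence to compute all sixteen products of elements from this set and verify they land back in the set (for instance, setting $y' = x*y$ in one identity computes $(x*y)*y \cdot y$ and so forth). Distinctness of the four elements follows from quasigroup cancellation together with idempotence: e.g.\ $x*y = x = x*x$ would force $y = x$, and $x*y = y*x$ combined with $(x*y)*y = y*x$ gives $(y*x)*y = y*x = x$, a contradiction with distinctness once one unwinds the other identities. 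Hence every $2$-generated subalgebra has exactly four elements.

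Next I would verify that the collection of these $2$-generated subalgebras forms a Steiner $4$-system, i.e.\ any two distinct points lie on a unique block. Existence of a block through distinct $p, q$ is clear from $\langle p, q\rangle$. For uniqueness I would show that whenever $p, q \in \langle a, b\rangle$ with $p \neq q$, we have $\langle p, q\rangle = \langle a, b\rangle$: using the explicit multiplication table produced above, this amounts to checking, for each of the six unordered pairs of distinct elements inside the $4$-element block, that the two products $p*q$ and $q*p$ recover the remaining two elements of the block. This completes the first direction.

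For the converse (coordinatization of a Steiner $4$-system $(M,R)$), I would first use the calculation of the first direction to observe that the $4$-element Stein quasigroup is unique up to isomorphism, since its multiplication table is entirely determined by the Stein identities once a generating ordered pair is named. On each block $\ell$ of $(M,R)$ I would pick an ordered pair of generators and transport the $4$-element Stein structure to $\ell$, yielding an operation $*_\ell$. Then define $*$ globally by $x*x = x$ and, for distinct $x,y$, $x*y := x *_\ell y$, where $\ell$ is the unique line through $x,y$; well-definedness is immediate from the Steiner axiom. The three Stein identities reduce to identities within a single line: both $x*y$ and $y*x$ lie on the line through $x,y$, so $(x*y)*y$ and $(y*x)*y$ are computed entirely inside $\ell$ by $*_\ell$, where the identities hold by construction. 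Finally, applying the first direction to $(M,*)$ shows that its $2$-generated subalgebras are $4$-element sets, and by construction each contains a block and conversely lies within one, so they coincide with the blocks of $(M,R)$, establishing coordinatization.

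The main obstacle is the bookkeeping in the first direction: verifying closure of $\{x, y, x*y, y*x\}$ under $*$ is a routine but nontrivial round of substitutions that in effect reconstructs the entire multiplication table of the unique $4$-element Stein quasigroup. Once this closure and the resulting uniqueness are in hand, the remainder of the lemma is essentially combinatorial.
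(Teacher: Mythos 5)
Your proposal is correct and follows essentially the same route as the paper: the lines are the $2$-generated subalgebras, which the Stein identities force to be copies of the unique $4$-element Stein quasigroup, and conversely one imposes that unique structure on each block, with the two-variable identities checked block-by-block. The one point worth making explicit in your converse (which the paper does state) is that $(M,*)$ is actually a \emph{quasigroup}, i.e.\ that $a*x=b$ and $y*a=b$ have unique solutions; this follows by the same within-a-block reasoning you already use, since any solution must lie in the block determined by $a$ and $b$, where the Latin-square property of the $4$-element algebra applies.
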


\begin{proof}  One direction is obvious; the lines are the $2$-generated
subalgebras of the quasigroup, which, as noted after
Definition~\ref{defrelvar}, all have cardinality $4$. For the other
direction, the universe of the algebra is $M$. We noted above just before
Lemma~\ref{defsq}, that all Stein  $4$-quasigroups are isomorphic and
strictly $2$-transitive. So, if we arbitrarily impose the structure of a
Stein  $4$-quasigroup on each line the entire structure is a Stein
quasigroup. It clearly satisfies the three equations of
Definition~\ref{defrelvar}.2 because they involve elements only within a
single block and also the requirement that each equation $ax =b$ ($ya =b$)
has a unique solution, as again the solution is within the block determined
by $a,b$. \end{proof}

\medskip

Coordinatization as in Definition~\ref{coordef} does not necessarily give
an interpretation. It does, if the variety $V$ satisfies the stronger
properties of an $(r,k)$-variety which we now give. \cite{GanWer} used
these conditions to extend the
 coordinatization phenomena to Steiner $q$-systems for prime power $q$.

 \begin{definition}[\cite{GanWer, Padchar}]\label{rkdef}

  \begin{enumerate} \item The variety $V$ is an $(r,k)$-variety if every $r$-generated
 subalgebra of any $A \in V$ is isomorphic to $F_r(V)$, the free
 $V$-algebra on $r$ generators, and $|F_r(V)|=k$.

 \item  A variety $V$ is \emph{ binary} \cite{Evans2var} if both all
     function symbols of $V$ are binary and the defining equations
     involve only 2 variables.

 \item A \emph{ $q$-Mikado} variety  \cite[p. 129]{GanWer} is a binary
     $(2,q)$-variety.
\end{enumerate}
 \end{definition}

Once the statements are understood, the proofs of the following
equivalences are straightforward.

\begin{lemma}[\cite{Padchar}]\label{pab} For a variety $V$ of groupoids, the following are equivalent:
\begin{enumerate}
\item $V$ is a $(2,k)$-variety;
\item Taking the $2$-generated subalgebras of any $A \in V$ as lines
    yields a Steiner $k$-system;
    \item the automorphism group of any $2$-generated algebra is strictly
        (i.e. sharply) $2$-transitive.
\end{enumerate}
\end{lemma}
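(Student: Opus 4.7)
The plan is to run two short circular implications: $(1) \Rightarrow (2) \Rightarrow (1)$ and $(1) \Leftrightarrow (3)$. The common engine is the observation that in any $(2,k)$-variety every pair of distinct elements of a 2-generated algebra is already a generating pair, by a cardinality argument: if $A \cong F_2(V)$ has $k$ elements and $c,d \in A$ are distinct, then $\langle c,d\rangle \subseteq A$ is itself 2-generated with $k$ elements by (1), hence equals $A$. I would isolate this as a preliminary observation and reuse it throughout.

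For $(1) \Rightarrow (2)$, every 2-generated subalgebra has exactly $k$ elements by hypothesis, so the candidate lines have the right size. The existence of a line through two distinct points $a,b \in A$ is witnessed by $\langle a,b\rangle$, and uniqueness follows by applying the engine inside that subalgebra. For $(2) \Rightarrow (1)$, apply (2) to $F_2(V) \in V$ itself (a 2-generated subalgebra of itself) to conclude $|F_2(V)| = k$. For any 2-generated $B = \langle a,b\rangle$ sitting in some $A \in V$, the canonical homomorphism $\phi\colon F_2(V) \to B$ sending the free generators to $a,b$ is surjective, and its domain and codomain both have cardinality $k$, so $\phi$ is an isomorphism.

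For $(1) \Rightarrow (3)$, the engine says any pair of distinct $c,d$ in a 2-generated $A$ generates $A$; the universal property of $F_2(V) \cong A$ then yields a unique homomorphism sending the free generators to $(c,d)$, which is surjective by choice of image and bijective by finiteness, hence an automorphism. Composing such automorphisms produces, for any two ordered pairs of distinct elements, a unique automorphism taking one pair to the other, i.e., sharp 2-transitivity. Conversely, sharp 2-transitivity of $\mathrm{Aut}(F_2(V))$ on its $k := |F_2(V)|$ elements forces $|\mathrm{Aut}(F_2(V))| = k(k-1)$; since each automorphism is pinned down by its action on one fixed generating pair, all $k(k-1)$ ordered pairs of distinct elements must be generating pairs of $F_2(V)$. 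For any 2-generated $B$ in some $A \in V$, the canonical surjection $F_2(V) \to B$ is then injective---a non-trivial collapse would identify two elements of a generating pair and force $B$ to be 1-generated. The one obstacle I anticipate is merely managing the degenerate case where ``2-generated'' happens to mean ``1-generated'' (e.g., a one-element algebra); excluding the trivial algebra, or noting that it vacuously satisfies all three conditions, disposes of it.
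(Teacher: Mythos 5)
The paper does not actually prove this lemma: it is quoted from Padmanabhan \cite{Padchar}, and the text merely remarks that ``once the statements are understood, the proofs \ldots are straightforward.'' So there is no in-paper argument to compare against; what you have written is essentially the standard argument, and most of it is sound. Your ``engine'' (in a $(2,k)$-variety every pair of distinct elements of a $2$-generated algebra is already a generating pair, by comparing cardinalities of $\langle c,d\rangle\subseteq A$) is the right pivot, and $(1)\Rightarrow(2)\Rightarrow(1)$ and $(1)\Rightarrow(3)$ go through. One presentational caveat in $(1)\Rightarrow(3)$: you need $A$ to be free \emph{on the pair} $(a,b)$, not merely abstractly isomorphic to $F_2(V)$, in order to invoke ``the unique homomorphism sending $(a,b)$ to $(c,d)$''; you do in fact get this from the canonical surjection $F_2(V)\to\langle a,b\rangle=A$ between finite sets of equal cardinality, but it deserves an explicit sentence.

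The one genuine soft spot is the injectivity step in $(3)\Rightarrow(1)$. You argue that a collapse $\phi(u)=\phi(v)$ with $u\neq v$ would make $B$ $1$-generated, and stop as if that were already absurd. It is not, under the reading of ``$2$-generated'' that the rest of your proof needs (generated by \emph{some} two distinct elements, so that $F_2(V)$ and the lines qualify): an algebra generated by two distinct elements could in principle also be generated by one. The contradiction must come from applying hypothesis (3) to $B$ itself: if $B=\langle e\rangle$, then every automorphism of $B$ is determined by the image of $e$, so $|\mathrm{Aut}(B)|\leq |B|$, whereas sharp $2$-transitivity forces $|\mathrm{Aut}(B)|=|B|\,(|B|-1)>|B|$ once $|B|>2$; the cases $|B|\leq 2$ are exactly the degenerate ones you flag at the end. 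Adding that line closes the step. A second, smaller point: your count $|\mathrm{Aut}(F_2(V))|=k(k-1)$ presupposes that $F_2(V)$ is finite, which is part of what (1) asserts but is not literally contained in (3); the finiteness-free version of the same step---transport a fixed generating pair to an arbitrary pair of distinct elements by the automorphism that sharp $2$-transitivity supplies---yields ``every pair of distinct elements generates'' without any counting.
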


Abusing notation, we say that an algebra  $A$ satisfying
Lemma~\ref{pab}.(3) is strictly $2$-transitive. And, as \cite{Swierfreealg}
points out, applying the strict two transitivity to $F_2(V)$, an argument
of Burnside \cite{Burnside}, \cite[Theorem 7.3.1]{Robinson1982} shows:

\begin{corollary}\label{kq} If $V$ is a $(2,k)$-variety then $k$ is a prime power.
\end{corollary}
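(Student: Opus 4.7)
The plan is to deduce the corollary directly from Lemma~\ref{pab}.(3) by invoking the classical structure theorem for finite sharply $2$-transitive permutation groups. Since $V$ is a $(2,k)$-variety, $F_2(V)$ has exactly $k$ elements and is itself a $2$-generated algebra in $V$, so by Lemma~\ref{pab}.(3) its automorphism group $G = \mathrm{Aut}(F_2(V))$ acts sharply $2$-transitively on the $k$-element set $F_2(V)$. The task thus reduces to showing that every finite sharply $2$-transitive permutation group has prime power degree.

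For that classical step I would follow Burnside's argument (as indicated in the parenthetical pointer to \cite{Burnside, Robinson1982}). First I would observe that $|G| = k(k-1)$, that a point stabilizer $G_x$ has order $k-1$ and acts regularly on the $k-1$ non-fixed points, so that $G$ is a Frobenius group of degree $k$ with Frobenius complement $G_x$. Then I would invoke Frobenius's theorem: the set $N$ consisting of the identity together with all fixed-point-free elements of $G$ is a normal subgroup of order $k$ acting regularly on $F_2(V)$. The final step is to show $N$ is elementary abelian; this is the content of the Frobenius kernel being nilpotent (Thompson) together with the fact that in a sharply $2$-transitive group the kernel has exponent $p$ for some prime $p$, forcing $|N| = k = p^n$.

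The main obstacle, and the reason I would cite rather than reprove, is showing that the Frobenius kernel of a finite sharply $2$-transitive group is elementary abelian; this is where real group-theoretic input is needed (Thompson's theorem on the nilpotency of Frobenius kernels, plus the observation that any two nonidentity elements of the kernel are conjugate under $G_x$ and therefore have the same order, which must be prime). Given these inputs, the corollary is immediate: $k = |F_2(V)| = |N| = p^n$.

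Thus the proof proposal is essentially a one-line application of Lemma~\ref{pab}.(3) plus a citation to the classical theorem. I would phrase it as: \emph{By Lemma~\ref{pab}.(3), $\mathrm{Aut}(F_2(V))$ acts sharply $2$-transitively on the $k$-element set $F_2(V)$; by Burnside's theorem on finite sharply $2$-transitive groups \cite{Burnside, Robinson1982}, the degree $k$ must be a prime power.}
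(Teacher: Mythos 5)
Your proposal is correct and follows exactly the route the paper takes: the paper gives no separate proof, simply noting (in the sentence preceding the corollary) that applying Lemma~\ref{pab}.(3) to $F_2(V)$ yields strict $2$-transitivity, and then citing Burnside \cite{Burnside} and \cite[Theorem 7.3.1]{Robinson1982} for the fact that a finite sharply $2$-transitive group has prime power degree. Your sketch of the classical step via the Frobenius kernel is sound (and in fact Thompson's nilpotency theorem is not needed, since a finite group all of whose non-identity elements have order $p$ is already a $p$-group by Cauchy's theorem), but it is supplementary detail rather than a different approach.
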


The salient characteristic  (crucial for e.g. Lemma~\ref{2propagates}) of
the equational theories that arise in this paper is that each defining
equation involves only two variables. In particular, none of the varieties
are associative. We rely heavily on a `classical' observation of Trevor
Evans. It requires no written proof, but a little thought.

\begin{lemma}[\cite{Steinq,Evansmonthly}] \label{2propagates} If $V$ is a  binary variety
 of idempotent algebras and each line of a Steiner system $(M,R)$ is
expanded to  an algebra from $V$ then the resulting algebra is in $V$.
\end{lemma}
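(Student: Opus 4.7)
The plan is to verify that each defining identity of $V$ holds in the expanded algebra $(M,*)$. Because $V$ is a \emph{binary} variety (Definition~\ref{rkdef}.(2)), every such identity has the form $t(x,y)=s(x,y)$ for binary terms $t,s$. It therefore suffices to fix one such identity and two elements $a,b \in M$ and check that $t^M(a,b)=s^M(a,b)$. First I would note that $*$ is well defined on all of $M^2$: for $a=b$ we set $a*a=a$ by idempotency, and for $a \neq b$ the unique line $\ell$ through $a,b$ supplies the product; the two specifications do not clash because any two distinct lines of a Steiner system meet in at most one point.

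Next I would split into two cases. If $a=b$, then by idempotency any binary term $u$ of $V$ evaluated at $(a,a)$ reduces, by induction on the structure of $u$, to $a$; hence $t^M(a,a)=a=s^M(a,a)$. If $a \neq b$, let $\ell$ be the unique line through $a$ and $b$. The expansion places a $V$-algebra structure on $\ell$, and $\ell$ is closed under the binary operation defined from that structure. I would then argue, by induction on subterms, that every intermediate value arising when computing $t^M(a,b)$ or $s^M(a,b)$ lies in $\ell$: the initial values $a,b$ are in $\ell$, and applying $*$ to two elements of $\ell$ returns an element of $\ell$. So both values can be computed \emph{entirely inside} $\ell$, and since the algebra on $\ell$ lies in $V$, the identity $t=s$ forces $t^M(a,b)=s^M(a,b)$.

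The only potential obstacle is precisely the closure-inside-a-line observation, which in turn depends on the binary nature of $V$: if an axiom involved three variables one could leave $\ell$ when introducing a third input. Because we have only two input variables and are closed under the operation once inside $\ell$, this obstacle never materializes. Once this is settled, no further verification is needed, which accords with the authors' remark that the lemma requires ``no written proof, but a little thought.''
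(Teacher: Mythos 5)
Your argument is correct and is exactly the ``little thought'' the paper alludes to: the paper itself offers no written proof, and your well-definedness check plus the two-case analysis (idempotency for $a=b$; closure of the computation of any two-variable term inside the unique line through $a\neq b$) is the intended justification. No gaps.
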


\begin{theorem}\label{directgammaA}
For any Mikado variety $V$ of $q$-quasigroups and any $(M,*) \in V$, the
Definition~\ref{coordef} coordinatization of a Steiner system $(M,R)$ is an
interpretation of $(M,R)$ into $(M,*)$ and thus yields a left and right
total interpretation $\mathfrak{S}^q$ into $V$.
\end{theorem}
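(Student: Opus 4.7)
I propose to exhibit a one-dimensional interpretation $\Gamma$ of $\tau=\{R\}$ into $\sigma=\{*\}$ whose domain formula selects the entire universe, and then verify both left and right totality. The key structural input is that $F_2(V)$ has exactly $q$ elements and that the defining identities of $V$ are $2$-variable.

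\emph{Definition of $\Gamma$.} Set $\partial_\Gamma(x):=(x=x)$ and interpret $=$ by $=$. Since $V$ is a $(2,q)$-variety, fix binary $\sigma$-terms $t_1(u,v),\ldots,t_q(u,v)$ that enumerate (representatives of) the $q$ elements of $F_2(V)$, with $t_1=u$ and $t_2=v$. Define
\[
R_\Gamma(x,y,z)\ :=\ (x\neq y)\wedge(y\neq z)\wedge(x\neq z)\ \wedge\ \bigvee_{i=1}^{q}\bigl(z=t_i(x,y)\bigr).
\]
For any $(M,*)\in V$ and distinct $a,b\in M$, the set $\{t_i(a,b):1\le i\le q\}$ is exactly the $2$-generated subalgebra $\langle a,b\rangle$. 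By Lemma~\ref{pab}.(3), the automorphism group of $\langle a,b\rangle$ acts sharply $2$-transitively on it, so the same subalgebra is generated by any ordered pair of its distinct elements. Hence $R_\Gamma$ is symmetric in its three arguments and is independent of the chosen term enumeration.

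\emph{Left totality.} Lemma~\ref{pab}.(2) asserts that the $2$-generated subalgebras of any $(M,*)\in V$ are the lines of a Steiner $q$-system $(M,R^*)$. By the paragraph above, $R_\Gamma^{(M,*)}(x,y,z)$ holds iff $x,y,z$ are distinct and all lie in a common $2$-generated subalgebra, i.e.\ iff $R^*(x,y,z)$. Thus $\Gamma(M,*)\approx(M,R^*)\in\mathfrak{S}^q$.

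\emph{Right totality.} Given $(M,R)\in\mathfrak{S}^q$, I would produce $(M,*)\in V$ with $\Gamma(M,*)\approx(M,R)$ via the Evans move (Lemma~\ref{2propagates}): set $x*x=x$, and on each line $\ell$ pick distinct $a_\ell,b_\ell\in\ell$ and a bijection $\ell\to F_2(V)$ sending $(a_\ell,b_\ell)$ to the two free generators, then pull back the multiplication. Because every two distinct points of $M$ lie on a unique line, $*$ is everywhere defined. Since $V$ is binary, each defining identity can be verified inside an arbitrary $2$-generated subalgebra of $(M,*)$; such a subalgebra is either a singleton (idempotent) or a line (isomorphic by construction to $F_2(V)\in V$), so $(M,*)\models V$. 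Its $2$-generated subalgebras recover precisely the lines of $(M,R)$, giving $\Gamma(M,*)\approx(M,R)$.

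\emph{Expected obstacle.} The conceptual work is concentrated in verifying that $R_\Gamma$ really is a symmetric, well-defined ternary relation, which is the place where the $(2,q)$-variety hypothesis — equivalently sharp $2$-transitivity on $F_2(V)$ — is indispensable. After that, left totality is immediate from Lemma~\ref{pab}, and right totality is essentially a restatement of Evans's observation that binary identities propagate from the lines to all of $M$.
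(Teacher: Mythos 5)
Your proposal is correct and follows essentially the same route as the paper: left totality via the disjunction of term equalities $z=t_i(x,y)$ enumerating $F_2(V)$ (the paper's $\theta_F$), and right totality via Lemma~\ref{2propagates}, imposing a copy of $F_2(V)$ on each line. Your added distinctness conjuncts and the explicit check that sharp $2$-transitivity makes $R_\Gamma$ symmetric and independent of the term enumeration are welcome refinements that the paper leaves implicit, but they do not change the argument.
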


\begin{proof} Left total:
$\Gamma_V((M,*))$  is the $\tau$-structure $(M,R)$ with universe $M$ where
$R$ is defined as follows.

Let $\theta_F(x,y, z)$ be the disjunction of the  atomic formulas $z =
f_i(x,y)$ where the
      $f_i(x,y)$ list the terms generating $F=F_2(V)$ from $\{x,y\}$. Then
      letting $R(x,y,z) \leftrightarrow \theta_F(x,y,z)$
defines   a relation $R$ such that $(M,R)$ is a Steiner $q$-system
consisting of the $2$-generated subalgebras of $(M,*)$.

Right total: Conversely, let $(M,R) \in \mathfrak{S}^q$. By
Lemma~\ref{2propagates}, expanding $(M,R)$ by placing an arbitrary copy of
$F_2(V)$ as multiplication on each line gives an algebra $(M,*)$ in $V$
with $\Gamma_V(M,*) \approx (M,R)$.\end{proof}

The argument so far, with different terminology,  is in \cite{GanWer}.  We
now explore the restrictions on the interpreting quasigroup  if the Steiner
system is required to satisfy a $T_\mu$.  As Ganter and Werner \cite[p
7]{GanWer2} point out, this interpretation into quasigroups is not unique.
They describe two different varieties of block algebras (one commutative
and one not) over $F_5$, depending on the choice of the primitive element
$a$ of $F_5$ (Definition~\ref{defrelvar}). Either can be used to
coordinatize a Steiner $5$-system. Thus the theory of the Steiner system
$(M,R)$ does not even predict the
  equational theory of the
 coordinatizing algebra and certainly does not control the first order
  theory. That is why we label the interpreting theory in
  Theorem~\ref{coordthm}: $\check T_{\mu, V}$.  Theorem~\ref{coordthm}.(2)
  proves the coordinatizing multiplication is not defined in the Steiner
System; so, we cannot invert \emph{ this} interpretation.

In Lemma~\ref{bvnobin}, we show the stronger result that there is no
interpretation of any sort of a quasigroup in a model of a $T_\mu$.

 \begin{theorem}\label{coordthm} If $T_\mu$ is the theory of a strongly
  minimal Steiner $q$-system
  (from Theorem~\ref{disp2}.1) and  $V$ is a Mikado $(2,q)$ variety of
  quasigroups, then

  \begin{enumerate}

\item There is a left and right total interpretation of $T_\mu$, a
    complete
 theory of Steiner $q$-systems, into $\check T_{\mu,V}$, an incomplete
 theory of quasigroups.

 \item If $\mu(\boldsymbol{\alpha})= q-2 >1$, the multiplication \emph{
     given by the coordinatization} is not definable in $(M,R)$.
 \end{enumerate}

\end{theorem}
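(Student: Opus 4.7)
\emph{Part (1)} reduces to Theorem~\ref{directgammaA}. Let $\check T_{\mu,V}$ denote the theory axiomatized by the equations defining the variety $V$ together with the $\Gamma_V$-translations of the axioms of $T_\mu$. A quasigroup $(A,*)$ models $\check T_{\mu,V}$ exactly when $(A,*)\in V$ and the induced Steiner system $\Gamma_V(A,*)$ satisfies $T_\mu$, so $\Gamma_V$ is left total on models of $\check T_{\mu,V}$. For right totality, any $(M,R)\models T_\mu$ is expandable to a quasigroup in $V$ by placing a copy of $F_2(V)$-multiplication on each line (Lemma~\ref{2propagates}), and this expansion satisfies $\check T_{\mu,V}$ with $\Gamma_V$-image isomorphic to $(M,R)$. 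Incompleteness of $\check T_{\mu,V}$ follows from the non-uniqueness of this expansion, exemplified by the Ganter--Werner observation that the two block algebras over $F_5$ arising from distinct primitive elements both coordinatize the same Steiner $5$-systems yet yield non-elementarily-equivalent quasigroups.

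\emph{Part (2)} proceeds by contradiction. Assume the coordinatizing $*$ is definable in $(M,R)$ by a formula $\psi(x,y,z;\bar m)$ with parameters $\bar m \subseteq M$. Because $\mu(\boldsymbol{\alpha})=q-2>1$, Fact~\ref{linelength} forces $q \geq 4$ and so every line has at least four points. Choose distinct $a,b\in M$ generic (algebraically independent) over $\bar m$, let $\ell$ be the line through them, set $c=a*b\in\ell$, and pick $d\in\ell\setminus\{a,b,c\}$. The plan is to exhibit an automorphism $\sigma\in\aut(M,R)$ fixing $\bar m\cup\{a,b\}$ pointwise and sending $c$ to $d$. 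Because $\psi$ is $\sigma$-invariant, so is the defined multiplication $*$, giving
\[
\sigma(c)=\sigma(a*b)=\sigma(a)*\sigma(b)=a*b=c,
\]
contradicting $\sigma(c)=d$.

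To construct $\sigma$ I work in the countable generic model $\Gscr_\mu$ (by elementarity and completeness of $T_\mu$, it suffices to argue there after moving the parameters by an elementary map). From \cite{BaldwinPao} I use three ingredients: $\ell$ is a strong substructure of $\Gscr_\mu$ with $\delta(\ell)=2$; $\acl(\{a,b\})=\ell$; and the induced $R$-structure on $\ell$ is a complete $3$-hypergraph. The transposition $(c\,d)$ fixing $\ell\setminus\{c,d\}$ pointwise is therefore an isomorphism of $\ell$ as a strong substructure that fixes $a$ and $b$. By the $\leq$-homogeneity of $\Gscr_\mu$ (Definition~\ref{gendef}, Theorem~\ref{Hrugen}), this isomorphism extends to an automorphism of $\Gscr_\mu$; the genericity of $a,b$ over $\bar m$ ensures the extension can further fix $\bar m$, since then $\bar m$ sits outside the closure of $\ell$ and independent from it. The principal obstacle is this final step---simultaneously fixing the parameters $\bar m$ while permuting the line---which reduces to the stationarity of the unique $1$-type of strongly minimal $T_\mu$ over algebraically closed sets; this follows by standard assembly of the Hrushovski-amalgamation facts already established in \cite{BaldwinPao}.
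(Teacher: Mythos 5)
Your proof takes essentially the same route as the paper: part (1) is the reduction to Theorem~\ref{directgammaA} together with an explicit axiomatization of $\check T_{\mu,V}$, and part (2) is the paper's automorphism argument — since $F_2(V)$ is strictly $2$-transitive, all points of the line through $a,b$ realize the same quantifier-free type over $\{a,b\}$, and $\leq$-homogeneity of the generic yields an automorphism fixing $a,b$ and permuting the rest of the line, so $a*b$ cannot be definable (your extra care with the parameters $\bar m$, which the paper's proof omits, is a genuine improvement, although the mechanism is really that $B\cup\ell$ is strong in $M$ for a finite strong $B\supseteq \bar m$ with $d(ab/B)=2$, rather than ``stationarity of the unique $1$-type,'' since $c$ and $d$ are algebraic over $ab$). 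Two small inaccuracies that do not affect the argument: $\acl(\{a,b\})$ is in general strictly larger than the line $\ell$ (you state but never use this equality), and the Ganter--Werner $F_5$ example involves two \emph{different} varieties, so it witnesses non-uniqueness of the coordinatizing variety rather than incompleteness of $\check T_{\mu,V}$ for a single fixed $V$ (which the paper likewise asserts without proof).
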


\begin{proof}
1) We verify that the coordinatization of Theorem~\ref{directgammaA} is an
interpretation. Let $\delta_F(x,y, f_1(x,y), \ldots f_k(x,y))$ denote the
quantifier-free diagram of $F=F_2(V)$.  The strict $2$-transitivity of $F$
guarantees the particular choice of the two elements $x,y$ does not matter.
Letting $R/\delta_F$ denote the substitution of $\delta_F$ for $R$, $\check
T_{\mu,V}$ is axiomatized by \[Eq(V) \cup \{(\forall x,y) \delta_F(x,y,
f_1(x,y), \ldots f_k(x,y))\} \cup \{\phi\myrestriction (R/\delta_F): \phi
\in T_\mu\}.\] Left and right total are as proved in
Theorem~\ref{directgammaA}.

 2) Without loss of generality, let $(M,R)$  be the countable generic for
 $T_\mu$ and suppose it is coordinatized by $(M,*)$. In a linear space $A$,
 two points $a,b$ such that no $c\in A$ satisfies $R(a,b,c)$ satisfy
$d_A(\{a,b\}) =2$. Let $\{a,b\}$ be a
 strong substructure of $(M,R)$ (i.e. $d_M(\{a,b\} =2$; see
Definition~\ref{stext}.(1)) and let $c_1, \dots c_k$ fill out the line
  through $a,b$ to a structure $A$.  By genericity there is a strong
  embedding of $A$ into $M$. Since $V$ is a Mikado variety, all triples
$a,b,c_i$ realize the same quantifier free $R$-type and $A\leq M$ implies
  for any permutation  $\nu$ of $k$ fixing $0,1$,   for $2\leq i < k$,
  there is an automorphism of $(M,R)$ fixing $a,b$ and taking $c_i$ to
  $c_{\nu(i)}$. Thus, $a*b$ cannot be definable in $(M,R)$.
 \end{proof}

We have found an interpretation of Steiner $q$-systems in $q$-quasigroups.
We showed Theorem~\ref{coordthm}.(2) that the coordinatization does not
yield an interpretation in the other direction. But as we sketch in
\ref{tripdef}-\ref{nounary}, with a minor (triples) hypothesis on $\mu$,
Definition~\ref{tripdef},  there is no interpretation of any dimension of
any quasi-group into $T_\mu$,  even if we allow a first order definition of
the image of $*$. In fact, the multiplication $*$   cannot be defined in
$T_\mu$; as, there are no non-trivial definable binary functions on models
of $T_\mu$.

\begin{definition}[Triples]\label{tripdef} Let $\bK^*_0$ be the class of
finite linear spaces as in Definition~\ref{lines}. Define $\Tscr$ as the
collection of functions  $\mu$ from good pairs into $\omega$ such that
$\mu\in \Uscr$ (Definition~\ref{BPsetting}.{\ref{itemKmu}})  and such that

       for every good pair $(A/B)$ with $|A| > 1$ and $\delta(B) =2$,
       $\mu(A/B) \geq 3$.  We will write `$\mu$ triples', \cite[Theorem
       5.2]{BaldwinVer}.
        \end{definition}

        We say `triples' is a weak condition because it addresses only good
        pairs $A/B$ where   $B$ has small dimension. If $\mu$ is in the
class $\Tscr$ of triplable $\mu$-functions, \cite{BaldwinVer} ensures that
there are no definable truly binary   functions in the following precise
sense; several equivalents are given by \cite[Lemma 2.10]{BaldwinVer}.  The
following notion generalizes Gratzer's notion of function being
distinguished by one of its variables \cite[p 201]{Gratzer}.

         \begin{definition}[Essentially Unary functions]\label{essunary} Let $T$
be a strongly minimal theory. An $\emptyset$-definable function $f(x_0
    \ldots x_{n-1})$ is called \emph{ essentially unary} if there is an
$\emptyset$-definable function $g(u)$ such that for some $i$, for all but a
finite number of $c \in M$, and all but  a set of Morley rank $ <n$ of
tuples $\bbar\in M^n$, $f(b_0, \ldots b_{i-1}, c, b_{i+1}, \ldots b_{n-1})
= g(c)$.
        \end{definition}

 With Verbovskiy, we introduced the notion of a decomposition of finite
$G$-normal subsets   \cite[\S 2]{BaldwinVer} of Hrushovski strongly minimal
sets with respect to automorphism groups  to prove:

\begin{fact}\label{bvnobin}\cite[Theorem 5.6]{BaldwinVer} For any strongly minimal Steiner system $(M,R)$
\begin{enumerate}
\item If $\mu \in \Tscr$ ($\mu$-triples),  every definable function  in a
    model of $T_\mu$ is essentially unary.

\item If $\mu \in \Uscr$, $T_\mu$ does not have any commutative definable
    binary function.
    \end{enumerate}
\end{fact}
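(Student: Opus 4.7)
The plan is to exploit the flat $\acl$-geometry of $T_\mu$ together with the $\mu$-bound on realizations of good pairs. For any definable function $f$ and generic input $\abar$, the value $c = f(\abar)$ lies in $\acl(\abar)$, so by flatness it appears in the annulus of some good pair $(C/B)$ with $B \subseteq \abar$ and $c \in C \setminus B$. The hypothesis on $\mu$ then controls how many disjoint copies of $(C/B)$ appear in the generic model, and the canonical amalgam of Definition~\ref{defcanam} together with the $\leq$-homogeneity of $\Gscr_\mu$ lets one produce automorphisms that move $c$ while fixing enough of $\abar$ to contradict functionality of $f$.

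For part (1), I would assume $\mu \in \Tscr$ and take a definable $n$-ary $f$ on $M \models T_\mu$ which is not essentially unary. Picking a generic tuple $\abar = (a_0, \ldots, a_{n-1})$ and setting $c := f(\abar)$, the failure of essential unarity gives $c \notin \acl(a_i)$ for any single $i$, so any minimal $B \subseteq \abar$ with $c \in \acl(B)$ satisfies $|B| \geq 2$; a standard analysis of the self-sufficient closure of $\abar \cup \{c\}$ then yields a good pair $(C/B)$ with $c \in C \setminus B$. Since $\abar$ is generic and carries no lines, $\delta(B) = |B| \geq 2$, so the triples condition forces $\mu(C/B) \geq 3$. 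Several disjoint copies of $C$ over $B$ then coexist in $\Gscr_\mu$, and freely amalgamating a further copy $C'$ over $B$ (Definition~\ref{defcanam}) produces a point $c' \in C'$ distinct from $c$ but in the same orbit of $\abar$-fixing automorphisms; this violates the functionality of $f$.

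For part (2), I would repeat the setup with $n = 2$ and generic $\abar = (x,y)$. A commutative binary $f$ that is not essentially unary (the essentially unary commutative case is easily seen to be generically constant, hence trivial) forces $B = \{x,y\}$ and $\delta(B) = 2$, and the $\Uscr$-condition then gives $\mu(C/B) \geq 2$. Two distinct copies of $C$ over $B$ combined with the transposition of $x$ and $y$ produce an automorphism of $\Gscr_\mu$ that stabilizes $\{x,y\}$ setwise while moving $c$, contradicting $c = f(x,y) = f(y,x)$.

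The main obstacle, and where I would lean on the Baldwin--Verbovskiy machinery of decompositions of finite $G$-normal subsets, is the upgrade in part (1) from an automorphism fixing only $B$ to one fixing all of $\abar$. The key point is that a fresh copy $C'$ of $C$ can be amalgamated with the strong closure of $\abar \setminus B$ over $B$ without creating any new lines, so the amalgamated structure strongly embeds in $\Gscr_\mu$ and its automorphism group acts transitively on the copies of $C$ over $B$ while fixing $\abar \setminus B$ pointwise. The triples hypothesis is used precisely to guarantee at least three such copies, which is what the decomposition argument needs in order to extract a nontrivial permutation that preserves every coordinate of $\abar$.
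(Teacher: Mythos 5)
This statement is a \emph{Fact} imported from \cite[Theorem 5.6]{BaldwinVer}; the present paper gives no proof of it (it only records that the proof uses the decomposition theory of finite $G$-normal subsets developed in \S 2 of that 58-page paper), so there is no in-paper argument to compare yours against line by line. Your outline does correctly locate the crux --- upgrading an automorphism that fixes only the base $B$ of a good pair to one that fixes all of $\abar$ --- but as written it has gaps that are not merely expository.

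First, the reduction to ``a good pair $(C/B)$ with $B \subseteq \abar$ and $c \in C \setminus B$'' is not available. The value $c = f(\abar)$ lies in $\acl(\abar)$, and in these flat geometries $\acl(\abar)$ is exhausted by an iterated tower of $0$-primitive extensions: $c$ may sit in a good pair whose base $B$ consists of elements of the intrinsic closure that are themselves algebraic over $\abar$ but not in $\abar$. Controlling how $\aut(\Gscr_\mu/\abar)$ acts layer by layer on such a tower is precisely what the $G$-normal decomposition machinery is for; assuming $B \subseteq \abar$ skips the entire difficulty. Second, ``freely amalgamating a further copy $C'$ over $B$'' is not a legal move: the generic already realizes the maximal number $\mu(C/B)$ of disjoint copies, and adjoining another one leaves $\bK_\mu$, so the amalgam does not strongly embed in $\Gscr_\mu$. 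The argument must instead show that the copies \emph{already present} are conjugate under automorphisms fixing $\abar$ pointwise (part 1) or fixing $\{x,y\}$ setwise (part 2); distinct copies over $B$ need not realize the same type over $\abar$ and over the rest of the intrinsic closure, and establishing this conjugacy is again the content of the cited machinery, not a consequence of Definition~\ref{defcanam} plus genericity. Finally, two smaller loose ends: failure of essential unarity does not immediately give $c \notin \acl(a_i)$ for every $i$ (essential unarity is a statement about generic agreement with a unary $g$, not about membership of values in unary closures), and in part (2) the ``generically constant'' case is dismissed without noting that it would still produce a commutative definable binary function unless one argues $\dcl(\emptyset)$ contributes nothing. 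So the skeleton is the right one, but the steps you flag as ``standard'' are exactly where \cite{BaldwinVer} does its work, and the free-amalgamation step as stated is incorrect.
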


   Fact~\ref{bvnobin}     is proved for the basic Hrushovski construction
   and strongly minimal Steiner systems in \cite{BaldwinVer}.  The crucial
   distinction between (1) and (2) in Fact~\ref{bvnobin} is that in (2)
   there may be  a definable `truly' binary function \cite[Section
    4.2]{BaldwinVer}  but it cannot be commutative.

We now show the versatility of the method of construction by finding
Steiner systems which both do and don't admit first order definable unary
functions.   Of course, since we are dealing with idempotent quasigroups
they can't produce algebraic terms for unary functions. But both the
quasigroups and the Steiner systems might define unary functions with more
complicated definitions. We repeat a short argument from \cite{BaldwinPao}
to motivate the argument of Proposition~\ref{nounary}.

\begin{theorem}\label{realudc} If $\mu\in \Uscr$ and $\mu(\boldsymbol{\alpha}) \geq 2$,
 i.e. lines have length at least $4$, then there is a $12$-element linear
 $A$ that is $0$-primitive over a singleton $a\in A$. Moreover, if
 $\mu(A/\{a\}) =1$, then $T_\mu$ has a non-identity definable unary
function.
\end{theorem}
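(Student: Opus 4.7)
My plan is to build a $12$-element linear space $A$ containing $a$ with $\delta(A)=1$, verify that $(A/\{a\})$ is a good $0$-primitive pair, and then use $\mu(A/\{a\})=1$ to extract an $\emptyset$-definable non-identity unary function on any $M\models T_\mu$.

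For the construction, the arithmetic $\delta(A)=|A|-\sum_\ell(|\ell|-2)=1$ with $|A|=12$ forces a total nullity of $11$. This is unattainable with only length-$4$ lines (parity obstruction), so the configuration must combine lines of lengths $3$ and $4$; such shorter lines are permitted in finite members of $\bK_0$ even though the generic $\Gscr_\mu$ has all lines of length exactly $\mu(\boldsymbol{\alpha})+2$. A natural candidate is a ``pencil plus crossings'' configuration: three length-$4$ lines $\ell_1,\ell_2,\ell_3$ through $a$, one further length-$4$ line $\ell_4=\{b_1,c_1,d_1,e\}$ meeting the pencil in three distinct off-$a$ points and introducing a new point $e$, together with three length-$3$ lines (for instance $\{b_2,c_3,f\}$, $\{b_3,d_2,f\}$, $\{c_2,d_3,e\}$) redistributing the remaining non-$a$ points of the pencil and introducing one further point $f$. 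Four length-$4$ lines and three length-$3$ lines give nullity $4\cdot 2+3\cdot 1=11$, and the crossings can be chosen so that every point of $A\setminus\{a\}$ lies on exactly two lines and the linear-space axiom holds.

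The main obstacle is verifying $0$-primitivity: no proper $B$ with $\{a\}\subsetneq B\subsetneq A$ may satisfy $\{a\}\leq B\leq A$. Since $\delta(A)=1$, any such $B$ would need $\delta(B)\leq 1$, so it suffices to show every proper subset of $A$ containing $a$ has $\delta\geq 2$. The two-lines-per-non-$a$-point design drives this: deleting any nonempty $S\subseteq A\setminus\{a\}$ removes $|S|$ points but reduces total nullity by strictly more, since each deleted point extinguishes or shortens two line contributions; a short case analysis on which lines survive in $B$ yields $\delta(B)\geq 2$. Goodness is immediate, because $\{a\}$ is already a singleton and $\delta(A)=1\neq 0=\delta(\emptyset)$ rules out $\emptyset$ as a smaller base.

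For the moreover clause, $\mu(A/\{a\})=1$ forces every $p\in\Gscr_\mu$ to be the base of a unique copy $A_p$ of the good pair, so the $11$-element set $A_p\setminus\{p\}$ is $\{p\}$-definable. The construction furnishes a natural distinguished element: the point $f$, which in $A$ is the unique point lying on two length-$3$ lines. In $M\models T_\mu$ those length-$3$ cliques of $A_p$ extend to full length-$4$ lines of $M$, so the $\emptyset$-definable formula $\phi(x,y)$ saying ``$y\in A_x$ and $y$ lies on two distinct lines of $M$ each having exactly three points in $A_x$'' picks out the image of $f$ in $A_x$ uniquely over each $x$; setting $g(x)$ to be the unique $y$ satisfying $\phi(x,y)$ yields the required non-identity definable unary function.
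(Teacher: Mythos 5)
Your proposal is correct and follows the paper's strategy exactly at the structural level: build an explicit $12$-point linear space of total nullity $11$, so $\delta(A)=1$ and $\delta(A/\{a\})=0$; check by inspection that every proper intermediate substructure has $\delta\geq 2$ to get $0$-primitivity and goodness; then use $\mu(A/\{a\})=1$ plus a combinatorially distinguished point of the unique copy over each $a$ to define the unary function. The genuine difference is the witness configuration. The paper uses nine $3$-lines and one $4$-line arranged in a cycle through two auxiliary points $b,c$, and its distinguished point $d_9$ is the unique point lying on exactly \emph{one} clique; you use a pencil of three $4$-lines through $a$, a transversal $4$-line, and three $3$-lines, with the distinguished point $f$ being the unique point on \emph{two} $3$-lines. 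I verified your incidences: the linear-space axiom holds, every non-$a$ point lies on exactly two lines, the nullity count $4\cdot 2+3\cdot 1=11$ is right, and the key inequality $\delta(B)\geq 2$ for all $\{a\}\subsetneq B\subsetneq A$ does hold (with equality in several cases, e.g.\ $B=A\setminus\{f\}$ and $B=\ell_1$), so your ``short case analysis'' is at the same level of rigor as the paper's ``by inspection''. Two minor points you should make explicit: (i) $\mu(A/\{a\})=1$ only gives \emph{uniqueness} of the copy over each $a$; \emph{existence} comes from genericity of $\Gscr_\mu$ (every singleton is strong, so the strong extension $\{a\}\leq A$ is realized) and then completeness of $T_\mu$; and (ii) your formula defines a function because $f$ is fixed by every automorphism of $(A,a)$ (it is the unique point on two $3$-lines, and line lengths are preserved), which is the exact analogue of the paper's use of $d_9$.
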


\begin{proof}

Let $A$ be the $\tau$-structure in $\bK*_0$ with $12$ elements, $\{a,b,c\}
\cup \{d_i:1\leq i \leq 9\}$. Let $\boldsymbol{\eta}$ be the isomorphism
type of the pair $(\{a\}, \{b,c\} \cup \{d_i:1\leq i \leq
9\})=(\{a\},A-\{a\})$ where $R$ holds of $(a,b,c)$, $(a ,d_{2i+1},
d_{2i+2})$ (for $0\leq i \leq 3$), $(b, d_{2i+2}, d_{2i+3} )$ (for $0\leq i
\leq 2$), $(b, d_8,d_1)$, and finally each triple from $\{c, d_8, d_9,
d_4\}$.
 There are 12 points, nine $3$-point line segments and one with $4$ points
so  $\delta(A) = 12 -(9 + 2) = 1$. By inspection, each proper substructure
$A'$ has $\delta(A') \geq 2$ so $A$ is $1$-primitive over $\{a\}$.  But
$d_9$ is the unique point that is in exactly one clique within $A$. Thus,
if $\mu(\boldsymbol{\eta}) =1$, the formula $(\exists x_1, x_2, y_1,
\ldots, y_8) \Delta(x_0,x_1, x_2 ,y_1, \dots y_8,y_9)$ (where $\Delta$ is
the quantifier free diagram of $A$) defines $d_9$ over any $a$ in any model
of $T_\mu$ ($a$ determines $d_9$).  Since each element in the generic
$\Gscr_\mu$ is embeddable in a copy $A \subseteq \Gscr_\mu$, each model of
$T_\mu$ has a global definable unary function.
\end{proof}

 While we have given only one example, one can extend the length of the
 cycle and get infinitely many examples. Note that the construction in
 Theorem~\ref{realudc} is iterable so the definable closure \emph{ may not
be locally finite}.

 Recall $\Uscr$ is the set of $\mu$ such that for any good pair $B/A$,
$\mu(B/A) \geq \delta(B)$. While the construction with  $\Uscr$ as the
class $\bU$ of admissible $\mu$ does not imply trivial unary closure (for
any $X$, closure of $X$ under definable unary functions is $X$), we can
obtain triviality by taking the class $\bU$ of admissible $\mu$ as a
$\Cscr$ which we now define. Nothing changes from the construction in
Section~\ref{genspace} except now $\bU \rightarrow \Cscr$.

\begin{definition}\label{defCscr} Define $\Cscr$ by restricting $\Uscr $
by requiring for each good pair $(A/B)$ that if $|B| =1$ and some point in
$A$ is determined by $B$ (such as $d_9$ by $a$ in $\boldsymbol{\eta}$ in
Theorem~\ref{realudc}), then $\mu(A/B) =0$.
\end{definition}

 We used the  cycle graphs of \cite{CameronWebb} to prove in
\cite[4.11]{BaldwinPao} that there are $2^{\aleph_0}$ distinct strongly
minimal Steiner theories $T_\mu$; this proof remains valid if $\mu$ is
restricted to $\Cscr$ or even $\Cscr \cap \Tscr$
(Definition~\ref{tripdef}). The slight variant on the proof of amalgamation
to show $\Cscr \cap \Tscr$ is non-empty follows that in \cite[Lemma
5.1.2]{BaldwinsmssIII}.

Recall that $\bK_\mu$ is the instantiation for $\bL_\mu$ with $\bL_0$ taken
as $\bK_0$.

\begin{proposition}\label{nounary} If $\mu \in \Cscr \cap \Tscr$, and
$\Gscr_\mu$ is the generic
for $\bK_{\mu}$, for any $a\in M$, $\dcl(a) = \{a\}$.
\end{proposition}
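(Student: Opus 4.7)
I argue by contradiction. Suppose $b \in \dcl(a) \setminus \{a\}$; without loss of generality assume $a$ is generic so that $\{a\} \leq \Gscr_\mu$ (for generic $a$, $\delta(\{a\})=1=d(\{a\})$). By strong minimality of $T_\mu$, $b \in \acl(a)$, so by the standard structure theory of algebraic closure in Hrushovski generics there is a finite $A$ with $\{a\} \leq A \leq \Gscr_\mu$ and $b \in A$, decomposable as a chain of good-pair extensions $\{a\} = A_0 < A_1 < \cdots < A_n = A$, each step $(A_{j+1}/B_j)$ a good pair over a base $B_j \subseteq A_j$.

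The plan is to proceed by induction on the height at which $b$ appears: let $i$ be minimal with $b \in A_{i+1}$ and write $(C/B) = (A_{i+1}/B_i)$. By the $\leq$-homogeneity of $\Gscr_\mu$, every $B$-fixing automorphism of $C$ extends to a $B$-fixing automorphism of $\Gscr_\mu$. Combined with $b \in \dcl(a)$, this will show that $b$ is a distinguished point of $C$ over $B$ and that $\mu(C/B)=1$ (once $B\subseteq\dcl(a)$, each realization of $(C/B)$ supplies a distinct conjugate of $b$). The key reduction is to show $B \subseteq \{a\}$: if some $e \in B$ had $e \neq a$, the inductive hypothesis would give $e \notin \dcl(a)$, yielding a $\sigma \in \Aut(\Gscr_\mu/a)$ with $\sigma(e) \neq e$; but then $\sigma(C)$ would be a distinct copy of the good pair over $\sigma(B)\neq B$ containing $\sigma(b) = b$, and a submodularity computation using $\delta(C)=\delta(B) \leq 1$ (since $B \leq \Gscr_\mu$ lies inside $\acl(a)$, which has $d$-dimension $1$, forcing $\delta(B)=d(B)\leq 1$) together with $\delta(C \cap \sigma(C)) \geq 2$ (derived from $\{a,b\} \subseteq C \cap \sigma(C)$ with $\delta(\{a,b\})=2$, while the $0$-primitivity of $(C/B)$ controls the strong subsets of $C$ and hence the line structure of $C\cap \sigma(C)$) would yield $\delta(C \cup \sigma(C)) \leq 2\delta(B)-\delta(C\cap\sigma(C)) \leq 0$, contradicting $d(C \cup \sigma(C)) \geq d(\{a\}) = 1$.

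Once $B \subseteq \{a\}$, the conclusion is immediate in two sub-cases. If $B = \{a\}$, then $(C/\{a\})$ is a good pair with $|B|=1$ and a distinguished point $b$, so by Definition~\ref{defCscr} of $\Cscr$ we have $\mu(C/\{a\}) = 0$, contradicting the realization of $C$ in $\Gscr_\mu$. If $B = \emptyset$, then $b \in \dcl(\emptyset)$; but $\Gscr_\mu$ is $\leq$-homogeneous and every quantifier-free type over $\emptyset$ has infinitely many realizations in the generic, forcing $\dcl(\emptyset) = \emptyset$, again a contradiction. The main obstacle is controlling $\delta(C \cap \sigma(C))$ in the submodularity argument of the inductive step: one must rule out that $C \cap \sigma(C)$ acquires enough additional lines to drop $\delta$ below $2$, which is where the $0$-primitivity of the good pair $(C/B)$ (no proper strong intermediate subset between $B$ and $C$) does the crucial work.
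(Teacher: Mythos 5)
Your overall strategy---reduce $b\in\dcl(a)\setminus\{a\}$ to a good pair $(C/B)$ at the first level of the $\acl$-chain where $b$ appears, force the base down to $\{a\}$, and then invoke the $\Cscr$-condition $\mu(C/\{a\})=0$ for pairs with a determined point over a singleton---is the natural way to flesh out this result, and it goes far beyond what the paper records: the paper's own proof is the two-sentence assertion that ``amalgamation cannot introduce unary functions,'' followed by an appeal to completeness. So you are not diverging from the paper's argument so much as attempting to supply one. The endgame cases ($B=\{a\}$ killed by $\Cscr$, $B=\emptyset$ killed by triviality of $\dcl(\emptyset)$) are fine in spirit, and the observation that $\leq$-homogeneity of $\Gscr_\mu$ turns $b\in\dcl(a)$ into ``$b$ is a distinguished point of the pair'' is exactly the right use of Definition~\ref{defCscr}. (Note that your argument never uses the hypothesis $\mu\in\Tscr$; that is probably harmless here, but it is worth noticing.)

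However, the inductive step has a genuine gap, and it sits precisely at the point you flag as ``the main obstacle.'' The submodularity computation needs $\delta(C\cap\sigma(C))\geq 2$, and $0$-primitivity does not deliver this. What $0$-primitivity (together with $B\leq C$) controls is the value of $\delta$ on subsets $Y$ with $B\subseteq Y\subseteq C$: for those, $\delta(Y)\geq\delta(B)$, with equality only at $Y=B$ or $Y=C$. But $C\cap\sigma(C)$ need not contain $B$ (that is the whole point of choosing $\sigma$ to move $e\in B$), so none of that applies. Worse, the estimate can simply fail: nothing prevents $\sigma$ from mapping $C$ onto itself setwise while moving $e$ (and fixing $a$ and $b$), in which case $C\cap\sigma(C)=C$ has $\delta(C)=\delta(B)\leq 1$ and your inequality gives only $\delta(C\cup\sigma(C))\leq 1$, which is no contradiction at all. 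In that scenario the conclusion $B\subseteq\{a\}$ is not reached, and some further input is needed---for instance the uniqueness of the base of a good pair (to get $\sigma(B)=B$ when $\sigma(C)=C$) combined with an argument that a setwise stabilizer of $C$ over $\{a,b\}$ acting nontrivially on $B$ still contradicts $b\in\dcl(a)$, or a case split on whether the ``new part'' $C-B$ meets $\sigma(C)-\sigma(B)$, as in the disjointness lemmas for realizations of good pairs in \cite{Hrustrongmin} and \cite{BaldwinPao}. As written, the proof does not close.
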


\begin{proof} Clearly amalgamation can not introduce unary functions so we have a
generic $\Gscr_\mu$ with no unary functions. By completeness this holds for
all model of $T_\mu$.
\end{proof}

\begin{question}\label{uncoord} {\rm
We have found a multiplication $*$ with domain $M$ for each model $(M,R)$
of $T_\mu$. The particular $*$ depends on $M$, the choice of $V$ and free
choices made in the construction (i.e. on an enumeration $\eta$ of $M$).
\begin{enumerate}
 \item Are all the $(M, R,*)$ (for the same $V$) elementarily equivalent?
     in the same equationally complete variety? Each is  in a subvariety
     of $V$.
     \item  Do they represent continuum many distinct varieties? I.e, are
         the classes $HSP(\Gscr_\mu)$ distinct for (sufficiently)
         distinct $\mu$?
         \item What can be said about the  model theoretic complexity of
             (completions of) the various $\check T_{\mu,V}$?
           \end{enumerate}}
\end{question}

\section{Constructing strongly minimal quasigroups}\label{findqg}

We have shown that in  general the strongly minimal Steiner $k$-systems in
the vocabulary $\tau =\{R\}$ for $k >3$ do not define  quasigroups. More
precisely by Corollary~\ref{kq} and Lemma~\ref{pab} if a $k$-quasigroup in
definable in a model of $T_\mu$, then $k =q = p^n$ for some $n$.

Working in a vocabulary $\tau'=\{R, *\}$  we will  construct a generic
structure that is both a Steiner system $(M,R)$ and a quasigroup $(M,*)$.
We require that the $*$-algebra be in a given $(2,q)$-variety $V$
(Definition~\ref{rkdef}) that coordinatizes $(M,R)$. Then taking the reduct
of $(M,R,*)$ to the vocabulary containing only $*$ we have a strongly
minimal  quasigroup with a flat $\acl$-geometry.

Recall $\mu(\boldsymbol{\alpha}) = q-2$ implies the maximal cliques in
$\bK_\mu$ have at most $q$ elements; in the generic model they all have
$q$.  We next ensure this maximality condition holds on each finite
structure by restricting to a smaller class of $\tau$-structures, $\tilde
\bK_{q}$. There remain trivial lines ($2$ unrelated points which are thus
in no clique.).

To construct a strongly minimal quasigroup, we modify the setting of
Definition~\ref{BPsetting}, where the basic parameters  $\bL^*_0, \epsilon,
\bL_\mu,U$ became $\bK^*_0, \delta, \bK_\mu, \Uscr$ to construct  the
theory $T_\mu$.

\begin{definition}\label{defK'}
Fix $\mu \in \Uscr$ with $\mu(\boldsymbol{\alpha}) = q-2$ and a
$(2,q)$-variety $V$ of quasigroups.
\begin{enumerate}
\item Working with $\tau$-structures:
\begin{enumerate}

\item $\bL^*_0, \epsilon, \bL_0 \rightarrow \bK^*_0$, $\delta$, and
    $\bK_0$ are exactly as in Section~\ref{genspace}.
\item  A new intermediate step. Let $\tilde \bK_{q}   \subseteq
    \bK_\mu$ be the class of finite $\tau$-structures $(A,R)$ such that
    each maximal clique has $q$-elements. The restriction to `full
    lines'  is expressed by a single $\forall\exists$ $\tau$-sentence.
\end{enumerate}

    \item $\sigma \rightarrow \tau'$: Expand $\tau = \{R\}$ to $\tau'$ by
        adding a ternary relation symbol $H$.
\begin{enumerate}
     \item $\bL_0 \rightarrow \bK'_\mu$:    Let $ \bK'_\mu$ be the
         finite $\tau'$- structures $A'$ such that $A'\myrestriction
         \tau \in \tilde{\bK}_q$ and $A'\myrestriction H$ is the graph
         of $F_2(V)$ on each line.
        \item $\epsilon \rightarrow \delta'$: For any  $A'\in
            \bK'_\mu$, let $\delta'(A') = \delta(A'\myrestriction
            \tau)$. Define $\leq'$ from $\delta'$ as usual.  Note that
            each non-trivial line in $A'$ has $q$ elements.
\item $\bU \rightarrow \Uscr'$: See Definition~\ref{newgood}.(3).(b).

        \end{enumerate}
        \end{enumerate}
        \end{definition}

\medskip

\begin{construction}\label{constexp} {\rm For $q>3$ a prime power,
fix $\mu$ with  $\mu(\boldsymbol{\alpha}) = q-2$ and $V$ as
Definition~\ref{defK'}. For clarity, we label the $\tau'$-structures in
this argument with primes.

We construct from $C \in \bK_\mu$   a finite set of structures $C'_i \in
\bK'_\mu$ (for $i<r_C$). First, there is a canonical extension of $C$ to
$\tilde C \in \tilde{\bK}_q$.  Since there is a strong embedding of $C$
into the generic $\Gscr_\mu$, there is a structure $\tilde C \subseteq
\Gscr_\mu$ whose universe consists of extensions of each clique of length
at least $3$ in $C$ to have length $q$; but with no new intersections.  The
extensions exist because each line $\Gscr$ has length $q$. And there are no
intersections since $C \leq \Gscr$. Thus $\tilde C $ is a partial Steiner
system in $\bK_\mu$.

Now  we  construct a finite family of $\tau'$-expansions of $\tilde C$,
$\langle  C'_i: i< r_C\rangle$  by imposing on each non-trivial line $\ell
\subset \tilde C$ a copy of $F_2(V)$ with graph $H\restriction \ell$. Since
$C$ uniquely determines $\tilde C$, we denote these expansions by $C'_i$
(rather than $\tilde C'_i$.
%As in
%Example~\ref{noniso},
There are finitely many non-isomorphic choices (depending on the
interaction of $H$ and $R$) to impose the $*$-structure on $\tilde C$;
$r_C$ is chosen to list all of them. By Lemma~\ref{2propagates} they all
are in $\bK'_0$ (satisfy the condition on $\delta$) and $C'_i
\myrestriction \tau = \tilde C$ for each $i$. }

\end{construction}

Since $\delta'$ ignores $*$,  $(A'/B')$ is a good pair  for $\bK'_\mu$  if
    the $\tau$-reduct of $(A'/B')$ is a good pair for $\bK_0$.

\begin{definition}[$\bK'$ good pairs and $\mu'$]\label{newgood}

\begin{enumerate}
\item Let $\boldsymbol{\alpha}'$ denote the    $\tau'$-isomorphism type
    $(a_1,a_2, b_1, \dots b_{k-2})$ of a $q$-element line over two
    points; $(\bbar/\abar)$ is good with respect to $\bK'_{0}$.
\item For a fixed $\bK_0$-good pair $(A/B)$ other than
    $\boldsymbol{\alpha}$, let $\langle \boldsymbol{\gamma'_i}\colon
    i<r_{A/B}\rangle$ for some $r_{A/B}\in \omega$, be a list of the
    isomorphism types of $\bK'_\mu$-good pairs $(A'/B')$ whose
    $\tau$-reduct is $(A/B)$ (with isomorphism type
    $\boldsymbol{\gamma}$).

\item For any  isomorphism type of a good pair $\boldsymbol{\gamma'}$  in
    $\tau'$
\begin{enumerate}
\item $\mu'(\boldsymbol{\alpha'}) =1$

\item  $\bU \rightarrow \Uscr'$: Let $\Uscr'$ be the collection of
    $\mu'$ such that for any other $\boldsymbol{\gamma'_i}$     (with
    $\boldsymbol{\gamma} = \boldsymbol{\gamma'_i}\myrestriction \tau$)
$$\mu'(\boldsymbol{\gamma_i}) =
    \mu(\boldsymbol{\gamma}).$$

    \item  $\bL_\mu \rightarrow \bK'_{\mu'}$:  Let $\bK'_{\mu'}$ be the
        class of structures $D'$ in $\bK'_{0}$ such that if $(A'/B')$
        is a good pair, then $\chi_{D'}(A'/B')  \leq \mu'(A'/B')$.
    \end{enumerate}
    \end{enumerate}
    \end{definition}

Note $\mu'(\boldsymbol{\alpha'})$ was forced to be $1$. As, when the
    $\tau$-reduct is a line of length $q$  over a two-point base, since two
    points determine a line and $V$ is a $(2,q)$ variety, all quasigroups
    on the line are isomorphic. Since, except for $\boldsymbol{\alpha'}$,
the various copies of each good pair have the same reduct to $\tau$ but may
differ in
 their quasigroup structure, each good pair $\boldsymbol{\gamma} = A/B$ in
$\bK_\mu$ has generated a finite number of distinct good pairs in
$\bK'_{\mu'}$.  With this framework in hand we can complete the proof of
Theorem~\ref{getsmquasigrp}. As the notation indicates, the variety $V$ has
not changed. But because $\tilde C$ does not expand uniquely to a
$\tau'$-structure $\mu'(\boldsymbol{\gamma'})$, in addition to having a
different domain will have a larger value than $\mu$ of the engendering
$\boldsymbol{\gamma}$. We show how to apply the proofs of the crucial
results 5.11 and 5.15 from \cite{BaldwinPao} for this result.

\begin{theorem}\label{getsmquasigrp}
For each $q$ and each $\mu \in \Uscr$, each of the $T_\mu$ in
Theorem~\ref{disp2} with line length $k = q =  p^n$ (for some $n$), and any
Mikado $(2,q)$ variety (block algebras) of quasigroups $V$ (i.e. block
algebras), there is a strongly minimal theory of quasigroups $T_{\mu',V}$
that defines a class of strongly minimal Steiner $q$-systems. The
associated quasigroups are not commutative.
\end{theorem}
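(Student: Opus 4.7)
The approach is to run the Hrushovski construction for the class $(\bK'_{\mu'},\leq')$ of Definitions~\ref{defK'} and~\ref{newgood}, then verify that the resulting countable generic $\Gscr_{\mu'}$ is simultaneously a strongly minimal Steiner $q$-system on its $\tau$-reduct and a quasigroup in $V$ on its $\{*\}$-reduct. The target theory $T_{\mu',V}$ is the complete theory of $\Gscr_{\mu'}$, and completeness propagates each desired property to all its models.

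The first nontrivial point is smoothness of $(\bK'_{\mu'},\leq')$: since $\tilde{\bK}_q$ is $\forall\exists$-axiomatizable, smoothness does not come for free as it did in \cite{BaldwinPao}. The saving observation is that a $\tau'$-substructure $A'\subseteq B'\in\bK'_{\mu'}$ is closed under the binary operation $*$ whose graph is $H$, so whenever two points of $A'$ share a non-trivial line $\ell$ in $B'$, the 2-generated $*$-subalgebra $\langle a,b\rangle$ equals $\ell$ by Lemma~\ref{pab}, forcing $\ell\subseteq A'$. Hence $\tau'$-substructures automatically satisfy the full-line property of $\tilde{\bK}_q$, and smoothness reduces to that of $(\bK_\mu,\leq)$. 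For amalgamation, I would adapt the canonical amalgam of Definition~\ref{defcanam}: given $C'\leq' A', B'$, amalgamate $\tau$-reducts via Baldwin--Paolini to obtain $D\in\bK_\mu$, then expand to $D'\in\bK'_{\mu'}$ by gluing the $*$-structures of $A'$ and $B'$ on existing lines; when Hrushovski's identification of duplicated good-pair realizations merges two $q$-lines of $D$, the strict 2-transitivity of $F_2(V)$ from Lemma~\ref{pab}.(3) makes the identification of the two $V$-structures on that line canonical. The $\mu'$-budget is respected because $\mu'(\boldsymbol{\gamma'_i})=\mu(\boldsymbol{\gamma})$ and $\mu'(\boldsymbol{\alpha'})=1$ exactly mirror the $\bK_\mu$-counts.

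With smoothness and amalgamation in hand, the generic $\Gscr_{\mu'}$ exists by Theorem~\ref{Hrugen}. Since $\delta'(A')=\delta(A'\restriction\tau)$ is submodular and $\ZZ$-valued, I would transcribe the proofs of \cite[Lemma 5.11, Lemma 5.15]{BaldwinPao} to show $T_{\mu',V}$ is strongly minimal; the good pairs of $\bK'_{\mu'}$ are in finite-to-one correspondence with those of $\bK_\mu$ via the $V$-enrichments of Definition~\ref{newgood}, and $\mu'$ was tuned precisely so that the rank-$1$ calculation transfers verbatim. Genericity then forces every pair of points of $\Gscr_{\mu'}$ to lie on a non-trivial line (via free amalgamation of an $\boldsymbol{\alpha'}$-extension), making $(\Gscr_{\mu'},R)$ a Steiner $q$-system; every 2-generated $*$-subalgebra is then the $q$-line it determines and carries an $F_2(V)$-structure, so Lemmas~\ref{pab} and~\ref{2propagates} give $(\Gscr_{\mu'},*)\in V$. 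For non-commutativity, the $\delta'$-geometry on $\Gscr_{\mu'}$ coincides with the flat $\delta$-geometry of its $\tau$-reduct; the proof of Fact~\ref{bvnobin}.(2) in \cite{BaldwinVer} rests only on flatness of the $\delta$-geometry together with the Hrushovski-generic structure, so it transfers to $T_{\mu',V}$ to rule out any commutative definable binary function, and since $*$ is in the vocabulary, $*$ cannot be commutative.

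\textbf{Main obstacle.} The crux is the amalgamation step of the second paragraph. Although the $*$-closure trick recovers the $\forall$-axiomatizable behaviour needed for smoothness, matching Hrushovski's merging procedure against the finite family of $V$-expansions supplied by Construction~\ref{constexp} requires careful bookkeeping: one must verify that the unique isomorphism identifying two $F_2(V)$-extensions of a common base is compatible with the rest of the merged $\tau'$-structure, and that the resulting $\chi_{D'}$-counts of good pairs remain within the prescribed $\mu'$-bounds. The transfer of \cite[Lemma 5.11]{BaldwinPao} to the $\forall\exists$ setting is the second place where one must be attentive, since the proof there uses $\forall$-axiomatizability in a few spots that now must be replaced with appeals to $*$-closure.
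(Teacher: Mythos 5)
Your overall skeleton matches the paper's: reduce amalgamation to the $\tau$-amalgamation of \cite[Lemmas 5.11, 5.15]{BaldwinPao}, expand the amalgam to $\tau'$ via Construction~\ref{constexp}, invoke Theorem~\ref{KL} for the generic, transfer strong minimality via the analogues of \cite[Lemmas 5.21, 5.23]{BaldwinPao}, and cite Fact~\ref{bvnobin}.(2) for non-commutativity. However, both of the technical cruxes you single out are handled by arguments that do not work. First, your ``saving observation'' for smoothness --- that a $\tau'$-substructure $A'\subseteq B'$ is closed under $*$ because $H$ is its graph --- is false: by Definition~\ref{defK'}.(2), $H$ is a ternary \emph{relation} symbol (Context~\ref{hruclass} deliberately restricts to relational vocabularies precisely so that substructures are \emph{not} closed under the operation and the predimension calculus survives). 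A $\tau'$-substructure can truncate a line, so membership in $\tilde\bK_q$ genuinely can fail for intermediate structures, and your reduction of smoothness to that of $(\bK_\mu,\leq)$ does not go through as stated.

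Second, in the amalgamation step you propose to resolve the merging of two $q$-lines by claiming sharp $2$-transitivity of $F_2(V)$ makes the identification of the two $V$-structures canonical. It does not: two copies of $F_2(V)$ on the same $q$-element set that agree on a generating pair $\{a,b\}$ are \emph{isomorphic} over $\{a,b\}$ but need not be \emph{equal} --- the witnessing bijection moves the remaining $q-2$ points, which are already pinned down in $E'$ and $F'$. (This non-uniqueness is exactly why Construction~\ref{constexp} produces finitely many non-isomorphic expansions $C'_i$.) The paper avoids the problem rather than solving it your way: it observes that any line based in $D'$ is already complete inside $D'$ (full lines), and that $0$-primitivity of $E$ over $D$ forces every line with two points in $E-D$ to meet $D$ in at most one point; hence the completion $\tilde G$ of the $\tau$-amalgam is $\tilde F\oplus_D\tilde E$ with no conflict between the quasigroup structures, and the $\tau'$-expansion is then supplied by Construction~\ref{constexp}. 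You would need to replace both of your key steps with arguments of this kind before the proof is complete.
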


\begin{proof} Choose $\mu\in \Uscr'$ as in Definition~\ref{newgood}.
We can  construct a generic, provided we prove amalgamation for
$\bK'_{\mu'}$. We now show that the amalgamation for the $\tau$-class, as
in Lemma 5.11
%~\ref{isoqe}
and Lemma 5.15
%~\ref{conclude}
of \cite{BaldwinPao} yields an amalgamation for $\tau'$. Consider a triple
$D', E', F'$ in $\bK'_{\mu'}$ with $D'\subseteq F'$ and with $E'$
$0$-primitive over $D'$.  We put primes on the labels in \cite{BaldwinPao}
and omit the primes for the reducts to $\tau$.

Apply Lemma 5.11 and Lemma 5.15 of \cite{BaldwinPao} to obtain a
$\tau$-structure $G$ that solves the amalgamation in $\bK_\mu$. \emph{ A
priori} $G$ might not be the domain of a structure in $ \tilde{\bK}_q$.
However, since $E$ is primitive over $D$, although there may be a line
contained in the disjoint amalgam $G$ with two points in each of $D$ and
$F-D$, each line that contains 2 points in $E-D$ can contain at most one
from $D$.  Thus the expansion $\tilde G$ from Construction~\ref{constexp}
is $\tilde F \oplus_D \tilde E$. (The tilde is intentionally omitted from
$D$.) And the $\tau'$-amalgam $G'$ is obtained for $\tilde G$ as in
Construction~\ref{constexp}.  Since the definition of strong extension is
by omitting specified configurations, $\bK'_\mu$ has amalgamation and is
smooth. So there is a generic by Theorem~\ref{KL}.  Now the strong
minimality of the generic $\Gscr'_{\mu',V}$ follows exactly as in Lemmas
5.21 and 5.23
%\ref{axcomp}, \ref{dclinacl}
of \cite{BaldwinPao} and, letting $T_{\mu',V} =\th(\Gscr'_{\mu',V})$, we
have proved Theorem~\ref{getsmquasigrp}. By Fact~\ref{bvnobin}.(2), the
quasigroup cannot be commutative.
\end{proof}

Necessarily in the construction given, a  good pair  $(C/B)$ (other than
 $\boldsymbol{\alpha}$) of $\tau$-structures in the reduct of a model of
$T_{\mu',V}$ will have many (but finitely) more copies of $C$ over $B$ than
$\mu(C/B)$. Thus, $T_{\mu',V} \myrestriction \tau$ is not $T_\mu$. But it
is strongly minimal since there are fewer $\tau$-definable  than
$\tau'$-definable  sets and each is finite or cofinite.

\begin{remark}\label{reduct}
The reduct of $\Gscr'_{\mu',V}$ to $\{H\}$ is a strongly minimal block
algebra, in particular, a quasigroup. For ease of reading we give the
definition $R$  in the algebraic form of the reduct to $H$:

$$R(u,v,w)\leftrightarrow  \bigvee_{\sigma \in F_2(V)} w =\sigma(u,v).$$
The theory of that reduct is essentially $T_{\mu',V \myrestriction H}$.
\end{remark}

\begin{notation}\label{notV'} Let $V'\subseteq V$ be
the variety of quasigroups induced by $T_{\mu',V\myrestriction H}$. For
simplicity below, we will write $*$ for the multiplication with graph $H$
on models of $T'_{\mu',V}$. % in $\Gscr'_{\mu',V}$.
\end{notation}

A line of papers \cite{BaldwinShelahsatfree, KucPil, PilSkli} study the
model theoretic properties of free algebras that are saturated or in
particular categorical in their cardinality.  Although the  strong
minimality of the quasigroup constructed in Theorem~\ref{getsmquasigrp}
implies all these model theoretic conditions, we show it cannot be free. We
rely on \cite[Lemma 4.4.2]{BaldwinsmssIII}, which we rephrase (and slightly
correct from the 1st arxiv version) to use the notation here.

\begin{fact}\label{notbasis} If $M$ is a strongly minimal set constructed by the methods of
\cite{BaldwinPao}, $A \leq M$, and $|M-A|$ is infinite then there are
infinitely many elements of $M$ that are $*$ independent over $A$.
\end{fact}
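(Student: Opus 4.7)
The plan is to prove the contrapositive: assume $\dim(M/A) = n < \infty$ and show $|M-A|$ is finite. Since $T_{\mu', V}$ is strongly minimal and matroid-independence (given by $\acl$) dominates $\ast$-independence in any expansion that includes $\ast$, the hypothesis yields a finite set $F = \{b_1, \ldots, b_n\} \subseteq M$ with $M = \acl(A \cup F)$. By selecting $F$ to be $\leq'$-generic over $A$, so that each $b_i$ satisfies $d_M(b_i / A \cup \{b_1,\ldots,b_{i-1}\}) = 1$, one checks using the submodularity of $\delta'$ together with $A \leq M$ that $B := A \cup F$ is itself strong in $M$, that is, $B \leq M$.

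Now every element $c \in M - B$ has $d_M(c/B) = 0$ and therefore lies in a $0$-primitive good-pair extension $(C/B_0)$ with finite base $B_0 \subseteq B$ and $C \subseteq M$. Split the analysis by whether $B_0 \cap F = \emptyset$ or $B_0 \cap F \neq \emptyset$. In the second case the finite tuple $F$ and the bounded size of good-pair bases yield only finitely many choices of $B_0$, and the $\mu'$-function caps each contribution at $\mu'(C/B_0)$, producing only finitely many elements of $M - A$ in total.

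The crux, and the principal obstacle, is the first case: showing that every good-pair realization $(C/B_0)$ with $B_0 \subseteq A$ occurring in $M$ must already occur inside $A$. I would argue this by an amalgamation-uniqueness argument inside $(\bK'_{\mu'}, \leq')$, in the spirit of the proof of Theorem~\ref{getsmquasigrp}. If some $C \subseteq M - A$ realized a good pair over $B_0 \subseteq A$ not already present in $A$, the canonical amalgam $A \oplus_{B_0} C$ would strongly embed into $M$, producing a $\leq'$-extension of $A$ inside $M$ whose $\mu'$-count of $(C/B_0)$-copies strictly exceeds the count in $A$. The definition of $A \leq M$ combined with the $\mu'$-saturation built into the generic $\Gscr'_{\mu',V}$ would then force $A$ to have already absorbed this copy, the required contradiction. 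Putting the two cases together yields $|M - A| < \infty$.
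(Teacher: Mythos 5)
First, there is no in-paper proof to compare against: Fact~\ref{notbasis} is quoted (rephrased) from \cite[Lemma 4.4.2]{BaldwinsmssIII}, and by the paper's stated convention ``facts'' are results imported from the earlier papers. Judged on its own terms, your argument has a fatal structural flaw. The contrapositive you set up reduces the problem to showing that $M=\acl(A\cup F)$ with $F$ finite and $A\leq M$ forces $M-A$ to be finite, and that implication is false in these constructions. Strongness ($A\leq M$) is much weaker than algebraic closedness: already two unrelated points form a strong subset over which the entire $q$-point line through them is algebraic, and iterating line-closure and further $0$-primitive extensions shows that $\acl$ of any strong set containing two independent points is infinite. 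Indeed, an $\acl$-basis $X$ of $M$ satisfies $X\leq M$, $M-X$ infinite, and $M=\acl(X)$, so the claim your second half aims at already fails for $A=X$, $F=\emptyset$. The subsidiary steps break for the same reason: there is no ``bounded size of good-pair bases'' (good pairs have bases of unbounded finite cardinality and infinitely many isomorphism types, so your second case contributes infinitely many points of $M-A$, not finitely many), and in your first case the assertion that genericity plus $A\leq M$ forces every realization of a good pair based in $A$ to lie inside $A$ conflates strong substructures with algebraically closed ones --- the generic realizes up to $\mu'$-many copies over strong subsets, but nothing places those copies inside $A$.

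The deeper problem is that your comparison of the two independence notions points the wrong way for what is needed. That $\acl$-independence implies $*$-independence only yields that the $\acl$-dimension of $M$ over $A$ is finite, which is no information at all: it can be $0$ while the conclusion of the Fact must still hold. The actual content of the statement is that $*$-generation --- iterated line-closure, since the $2$-generated subalgebras are exactly the lines --- is drastically weaker than $\acl$; what must be shown is that $\langle A\cup Y\rangle_*$ for finite $Y$ never exhausts an $M$ with $M-A$ infinite, which is a predimension/flatness computation about how few points line-closure adds, not a matroid-dimension count. Any correct proof has to engage with the gap between $\dcl$ (or $\acl$) and the $*$-generated subalgebra rather than route the argument through $\acl(A\cup F)$.
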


 \begin{theorem}\label{notfree} The reduct $(M, *)$  of  a model $(M, *,R)$
 of $T_{\mu',V}$ is never $V'$-free on infinitely many generators.
   \end{theorem}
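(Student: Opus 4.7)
The strategy is to suppose $(M, *)$ is $V'$-free on an infinite $B \subseteq M$, then apply Fact~\ref{notbasis} to $A := \acl_M(B - \{b\})$ for a chosen $b \in B$. The dimension of $M$ over $A$ will be forced to be both $\geq \aleph_0$ (by Fact~\ref{notbasis}) and $\leq 1$ (by freeness), the desired contradiction.

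The preliminary step is to show that $B$ is $\acl$-independent in $(M, *)$. Fix $b \in B$; for each $b_1 \in B - \{b\}$, use the universal property of $F_B(V')$ to lift the set-map $\pi : B \to M$ with $\pi(b) = b * b_1$ and $\pi(b') = b'$ for $b' \in B - \{b\}$ to an endomorphism $\sigma$ of $(M, *)$. Similarly lift $\tau : b \mapsto b / b_1$, $b' \mapsto b'$ (using the right quasigroup division in $M$). A short calculation from the quasigroup identities yields $\sigma \circ \tau = \tau \circ \sigma = \mathrm{id}$ on $B$, and hence on $M$ by uniqueness of the lift. Therefore $\sigma$ is an automorphism of $(M, *)$---indeed of $(M, *, R)$ since $R$ is $*$-definable (Remark~\ref{reduct})---that fixes $B - \{b\}$ pointwise and sends $b$ to the (by freeness, distinct) element $b * b_1$. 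Letting $b_1$ range over $B - \{b\}$ produces infinitely many distinct conjugates of $b$ over $B - \{b\}$, so $b \notin \acl_M(B - \{b\})$.

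Now put $A := \acl_M(B - \{b\})$; standard Hrushovski theory gives $A \leq M$. Quasigroup division shows $b * b_1 \notin A$ for every $b_1 \in B - \{b\}$ (otherwise $b = (b * b_1) / b_1 \in A$, contradicting the preceding paragraph), and these elements are pairwise distinct by freeness, so $|M - A|$ is infinite. Fact~\ref{notbasis} then yields infinitely many elements of $M$ that are $*$-independent over $A$, hence $\dim(M/A) \geq \aleph_0$. On the other hand, $(M, *) \cong F_B(V')$ is $V'$-generated by $B \subseteq A \cup \{b\}$, so every element of $M$ is a $V'$-term in $A \cup \{b\}$, giving $M \subseteq \dcl(A \cup \{b\}) \subseteq \acl(A \cup \{b\})$; thus $\dim(M/A) \leq 1$, the promised contradiction.

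The main obstacle is the automorphism-orbit analysis in the second paragraph: the universal property of $F_B(V')$ supplies only endomorphisms, and one must invert them using quasigroup division to produce enough automorphisms fixing $B - \{b\}$ to move $b$ to infinitely many targets. The remainder is careful bookkeeping between model-theoretic $\acl$ and universal-algebraic subalgebra generation inside the Hrushovski setting, and an appeal to the fact that algebraic closures are strong in the constructions of \cite{BaldwinPao}.
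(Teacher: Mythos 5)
Your second paragraph --- the explicit automorphisms obtained by lifting the basis maps $b\mapsto b*b_1$ and $b\mapsto b/b_1$ and cancelling them against each other via the quasigroup division laws --- is correct, and it is a genuinely different route to the key point than the one the paper takes. The paper instead observes that a free basis $Y$ is an infinite set of indiscernibles (permutations of $Y$ extend to automorphisms) and that in a strongly minimal theory an infinite indiscernible sequence must realize the same Ehrenfeucht--Mostowski type as an $\acl$-basis, hence be algebraically independent. Your argument reaches the same conclusion ($b\notin\acl_M(B-\{b\})$, so the free basis is $\acl$-independent) by exhibiting infinitely many conjugates of $b$ over $B-\{b\}$ directly; it is more hands-on and uses nothing beyond freeness and the cancellation laws.

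The gap is in your final step. Fact~\ref{notbasis} produces elements that are $*$-independent over $A$, i.e.\ independent in the sense of $V'$-subalgebra generation --- that is exactly how the paper uses it (``$M$ has infinite $V'$ dimension over $Y$, so $Y$ is not a $V'$ basis''). You pass from this to ``$\dim(M/A)\geq\aleph_0$'' and then contradict it with the bound $\dim(M/A)\leq 1$, but the latter bound is computed with respect to $\acl$. These two independence notions do not coincide here: in these structures $\acl$ is vastly larger than term-closure (this is the content of Theorem~\ref{coordthm}.(2) and Fact~\ref{bvnobin}), so infinitely many elements can be pairwise $*$-independent over $A$ while all lying inside $\acl(A\cup\{b\})$; and since the subalgebra-generation operator of a quasigroup variety need not satisfy exchange, lying in $\langle A\cup\{b\}\rangle_{V'}$ does not bound the size of a $*$-independent set either. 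The repair is short and lands you on the paper's own endgame: having shown $B$ is $\acl$-independent, conclude $B\leq M$ (a $d$-independent set is strong), and apply Fact~\ref{notbasis} with $A=B$ itself. The elements it produces are $*$-independent over $B$, in particular not in the $V'$-subalgebra generated by $B$, which directly contradicts $B$ being a generating set --- no comparison of the two dimension notions is needed.
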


   \begin{proof} Suppose that $(M, *)$ is a free  $V'$-algebra with an infinite basis $Y$.
Then every permutation of $Y$ extends to an automorphism of $M$, so   $Y$
is an infinite   set of  indiscernibles in $M$. Let $X$ be an $\acl$-basis
of the strongly minimal set $(M,*)$. Then $X$ is also an infinite set of
indiscernibles. But it is immediate from strong minimality that there
     cannot be two distinct Ehrenfeucht-Mostowski types (for each $k$, the
     first order formulas that hold for each $k$ tuple from the set of
     indiscernibles) over the empty set realized by such sequences.
     However, the basis $Y$ cannot be algebraically independent in the
     model theoretic sense. If it were $Y$ would realize  the same
     $k$-types as $X$ for each $k$ and thus $Y\leq M$. But then by
     Fact~\ref{notbasis}, $M$ has infinite $V'$ dimension over $Y$, so $Y$
     is not a $V'$ basis.
\end{proof}

\begin{question}\label{BarCas}{\rm
The use of the graph of the quasigroup in Construction~\ref{constexp} is
similar to that in the study of model complete Steiner triple system of
Barbina and Casanovas \cite{BarbinaCasa}. As  noted in Remark 5.27 of
\cite{BaldwinPao}, their generic structure $M$ differs radically from ours:
for them, $\acl_M(X) = \dcl_M(X)$, the theory is at the other end of the
stability spectrum, and the generic model is atomic rather saturated. }

\emph{Is it possible to develop a theory of $q$-block algebras for
arbitrary prime powers similar to that for Steiner quasigroups with $q=3$
in their paper? That is, to find a model completion for each of the various
varieties of block algebras discussed in Definition~\ref{defrelvar}.3 ?}

{\rm \cite[Theorem 4.6]{GanWer} asserts that every $(m,q)$-variety has the
finite embedding property. Thus, the Fra{\" \i}ss\'{e} construction of a
model completion should be immediate.} \emph{Where do the resulting
theories lie in the stability classification?}

\end{question}

\begin{question}\label{incompleteness} {\rm
We guaranteed that the quasigroup $\Gscr_{\mu} \myrestriction \{*\}$ is in
$V$; but it may satisfy more equations. Different varieties of quasigroups
may have the same free algebra on two generators.
Construction~\ref{constexp} depends on both the original $\bK^q_{\mu}$ and
$F_2(V)$. \emph{ How many varieties can arise from the same $F_2(V)$? There
are two variants on this question. One is, `how many varieties of
quasigroup can have the same free algebra on two generators?'. The second
asks about only   varieties  that arise from choosing a $\mu$ and a variety
$V$ as in Construction~\ref{constexp}.}}
\end{question}

How do those varieties that the $\Gscr_{\mu',V}$ generate behave?
Immediately from known results each such variety satisfies the strong
properties listed below.

\begin{corollary}\label{uacor} For any model $M$ of any $T_{\mu',V}$
the reduct of $M$ to $*$ is in a  variety that is congruence permutable,
regular and uniform,  \cite[Theorem 3.1]{Quacksteiner} or \cite[Corollary
2.4]{GanWer}.
\end{corollary}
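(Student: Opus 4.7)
The plan is simply to invoke the two cited universal-algebra results applied to the variety $V'$ from Notation~\ref{notV'}. First I would record that by Theorem~\ref{getsmquasigrp} and Notation~\ref{notV'}, for any $M \models T_{\mu',V}$ the reduct $(M,*)$ is a quasigroup lying in $V' \subseteq V$, where $V$ is the fixed Mikado $(2,q)$-variety of quasigroups. So it suffices to show that $V'$ itself enjoys the three congruence properties.

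Next I would argue that $V'$ inherits being a $(2,q)$-variety from $V$. Every two-generated subalgebra of a member of $V'$ is a two-generated subalgebra of a member of $V$, hence is isomorphic to $F_2(V)$ by Definition~\ref{rkdef}; and $F_2(V) \in V'$ because the canonical line quasigroup appears as a $2$-generated subalgebra of $\Gscr'_{\mu',V}$. Consequently $F_2(V') = F_2(V)$ and $V'$ is again a Mikado $(2,q)$-variety of quasigroups in the sense of Definition~\ref{rkdef}.

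Now the conclusion is a direct citation. Quackenbush's result (\cite[Theorem 3.1]{Quacksteiner}) gives a Mal'cev term built from the quasigroup operations, so $V'$ is congruence permutable; since permutability is a strong Mal'cev condition this passage from $V$ to the subvariety $V'$ is automatic. The Ganter-Werner corollary (\cite[Corollary 2.4]{GanWer}) then gives congruence regularity and uniformity: the strict $2$-transitivity of the automorphism group of $F_2(V)$ established in Lemma~\ref{pab}.(3) forces any non-trivial congruence to collapse each $2$-generated subalgebra, and a short argument shows every congruence class then has the same cardinality $|F_2(V)| = q$ and determines the whole congruence.

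The only step that requires any care rather than pure citation is the second paragraph: one must verify $F_2(V') = F_2(V)$ so that $V'$ actually satisfies the Mikado hypotheses of the two quoted theorems, and this is where I would spend a sentence or two. After that, there is no real obstacle — the three congruence properties are Mal'cev-type conditions (or consequences of strict $2$-transitivity of $F_2$), and each is preserved in passing from $V$ to the subvariety $V'$ generated by $(M,*)$.
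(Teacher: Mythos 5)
Your proposal is correct and matches the paper, which offers no written proof beyond the remark that the corollary is ``immediate from known results'' together with the two citations you invoke. Your extra care in checking $F_2(V')=F_2(V)$ so that $V'$ is itself a $(2,q)$-variety is sound but not strictly needed for the statement as phrased: since $(M,*)$ already lies in the Mikado $(2,q)$-variety $V$, and the cited results of Quackenbush and Ganter--Werner apply to $V$ directly (with all three properties in any case inherited by subvarieties), membership in $V$ alone suffices.
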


\begin{question}\label{uaques} Every finite algebra in a $(2,q)$-variety    has a finite
decomposition into directly irreducible algebras \cite[Corollary
2.4]{GanWer}. \emph{ Are there any similar results for infinite strongly
minimal block
 algebras?}
\end{question}

\subsection*{Acknowledgment} We acknowledge helpful discussions with Joel Berman, Omer Mermelstein,
Gianluca Paolini, and Viktor Verbovskiy. I thank the referee for a
particulary incisive and detailed report.

{\bf Compliance with ethical standards}

This paper is the work of the author.

Research partially supported by Simons travel grant  G3535.

Data sharing not applicable to this article as no datasets were generated
or analysed during the current study.

The author  has no competing interests to declare that are relevant to the
content of this article.

%\bibliography{C:/Users/jbald/texmf/bibtex/bib/local/ssgroups}\bibliographystyle{spmpsci.bst}
%
%
%\end{document}

\end{document}